\let\et=\etexdraw
\def\etexdraw{\drawbb\et}
\theoremstyle{plain}
\newtheorem{thm}{Theorem}[section]
\newtheorem{thm*}{Theorem}
\newtheorem{lem}[thm]{Lemma}
\newtheorem{prop}[thm]{Proposition}
\newtheorem{prop*}[thm*]{Proposition}
\newtheorem{cor}[thm]{Corollary}
\theoremstyle{definition}
\newtheorem{defi}[thm]{Definition}
\newtheorem{ex}[thm]{Example}
\newtheorem{qu}[thm]{Question}
\theoremstyle{remark}
\newtheorem{rmk}[thm]{Remark}
\DeclareMathOperator{\Coker}{Coker}
\DeclareMathOperator{\Image}{Im}
\DeclareMathOperator{\Hom}{Hom}
\DeclareMathOperator{\Ass}{Ass}
\DeclareMathOperator{\Ext}{Ext}
\DeclareMathOperator{\Ann}{ann}
\DeclareMathOperator{\HH}{H}
\DeclareMathOperator{\Supp}{Supp}
\DeclareMathOperator{\Spec}{Spec}
\DeclareMathOperator{\Res}{Res}
\DeclareMathOperator{\fa}{\mathfrak{a}}
\DeclareMathOperator{\fm}{\mathfrak{m}}
\begin{document}

\title{The support of local cohomology modules}

\author{Mordechai Katzman}
\address{Department of Pure Mathematics,
University of Sheffield, Hicks Building, Sheffield S3 7RH, United Kingdom}
\email{M.Katzman@sheffield.ac.uk}

\author{Wenliang Zhang}
\address{Department of Mathematics, Statistics, and Computer Science, University of Illinois at Chicago, 851 S. Morgan Street, Chicago, IL 60607-7045}
\email{wlzhang@uic.edu}

\thanks{M.K.  gratefully acknowledges support from EPSRC grant EP/J005436/1. W.Z. is partially supported by NSF grants DMS \#1405602/\#1606414.}

\subjclass[2010]{13D45, 13A35}
\keywords{local cohomology, prime characteristic}

\begin{abstract}
We describe the support of $F$-finite $F$-modules  over  polynomial rings $R$ of prime characteristic. Our description yields an algorithm to compute the support of such modules; the complexity of our algorithm is also analyzed. To the best of our knowledge, this is the first algorithm to avoid extensive use of Gr\"obner bases and hence of substantial practical value.
We also use the idea behind this algorithm to prove that the support of $H^j_I(S)$ is Zariski closed for each ideal $I$ of $S$ where $R$ is noetherian commutative ring of prime characteristic with finitely many isolated singular points and $S=R/gR$ ($g\in R$).
\end{abstract}

\maketitle

\section{Introduction}\label{Section: Introduction}

Local cohomology is a powerful tool introduced by  Alexander Grothendieck in the 1960's (\cite{HartshorneLocalCohomology}) and it has since yielded many geometric and algebraic insights. From an algebraic point of view, given an ideal $I$ in a commutative ring $R$,
local cohomology modules $\HH_I^{i}(-)$ ($i\geq 0$) arise as right-derived functors of the torsion functor on $R$-modules given by
$\Gamma_I(M)=\{ a\in M \,|\, I^k a=0 \text{ for some } k\geq 0\}$. A central question in the theory of local cohomology is to determine for which values of $i$ does the local cohomology module $\HH_I^{i}(M)$ vanish. This question is both useful and difficult even in the case where $R$ is a regular local ring and $M=R$, and this case has been studied intensely since the introduction of local cohomology (e.g., cf.~\cite{HartshorneCohomologicalDimension}, \cite{PeskineSzpiroDimensionProjective} and \cite{OgusLocalCohomologicalDimension}).

The aim of this paper is to describe the support of local cohomology modules in prime characteristic.
Specifically, we first study the support of $F$-finite $F$-modules over polynomial rings
$R$ and show a computationally feasible
method for computing these without the need to compute generating roots. To the best of our knowledge, this is the first computationally feasible algorithm for  calculating the support of these modules in prime characteristic.
We then apply this to the calculation of supports of local cohomology modules and
of iterated local cohomology modules $\HH^{i_1}_{I_1}\left( \HH^{i_2}_{I_2}\left(\dots \HH^{i_n}_{I_n}(R) \dots \right) \right)$ thus, for example,
giving an effective method for determining the vanishing of Lyubeznik numbers.

Our methods are interesting both from theoretical and practical points of view.
A careful analysis of the algorithms resulting from these methods (see Section \ref{Section: bounds on degrees} below) shows that
\begin{itemize}
\item [(a)] the degrees of the polynomials appearing in the calculations have a low upper bound, and, furthermore,
\item [(b)] when the method is applied to the calculation of supports of local cohomology modules, if the input is given by polynomials with integer coefficients, then the calculation of supports
modulo different primes $p$ involves polynomials \emph{whose degrees can be bounded from above by a constant times $p$, that constant being independent of $p$.}
\end{itemize}

In \cite{LyubeznikFModulesApplicationsToLocalCohomology}) Gennady Lyubeznik described an algorithm for computing the support of $F$-finite $F$-modules. That algorithm requires the calculation for roots of these modules, and this relies on the repeated calculation of Grobner bases; these are often too complex to be computed in practice.

Our algorithm consists of an iterative procedure (as described in section \ref{Section: Calculation of supports}) which produces a quotient of a finite-rank free module with the same support as the given $F$-finite $F$-module. To find the support itself one needs to find a presentation for this submodule as a cokernel of a matrix: the support is then defined by the ideal of maximal minors of that matrix.

Crucially, the iterative procedure above \emph{does not require the calculation of Gr\"{o}bner bases,} and consists  essentially of matrix multiplications together with the listing of terms of polynomials whose  degrees are bounded by a constant (independent of $p$) times $p$. The final step of the algorithm, finding a presentation of a finitely generated module, requires the calculation of one module of syzygies, hence the calculation of \emph{one} Gr\"ober basis.

It is this that makes our algorithm a practical tool for computing supports of $F$-finite $F$-modules.\footnote{The various algorithms in this paper have been incorporated in the ``FSing'' package of Macaulay 2\cite{Macaulay}.}

\medskip
The reason why we are able to compute and analyze in characteristic $p$ the support of $F$-finite $F$-modules is the existence of
the  \emph{$e$th iterated Frobenius endomorphism} $f^e: R \rightarrow R$,
taking $a\in R$ to $a^{p^e}$ ($e\geq 0$).
The usefulness of these  lies in the fact that given an $R$-module $M$, we may endow it with a new $R$-module structure via $f^e$:
let  $F^e_* M$ denote the additive Abelian group $M$
denoting its elements $\{ F^e_* m \,|\, m\in M \}$, and endow $F^e_* M$  with the $R$-module structure  is given by  $a F_*^e m = F^e_* a^{p^e} m$
for all $a\in R$ and $m\in M$.

This also allows us to define  the \emph{$e$th Frobenius functors} from the category of $R$-modules to itself given by
$F_R^e(M)=F_*^e R \otimes_R M$ and viewing this as a $R$-module via the identification of $F_*^e R$ with $R$: the resulting $R$-module structure
on $F_R^e(M)$ satisfies
$a(F_*^e b \otimes m)=F_*^e a b \otimes m$ and  $F_*^e a^p b \otimes m=F_*^e b \otimes a m$ for all $a,b\in R$ and $m\in M$.

We will be interested in this construction mainly for regular rings and henceforth in this paper $R$ will denote a regular ring of characteristic $p>0$.

Recall that an $F$-finite $F_R$-module $\mathcal{M}$
is an $R$-module obtained as
a direct limit of a direct limit system of the form
$$
M \xrightarrow{U} F^1_R(M) \xrightarrow{F^1_R(U)} F^2_R(M) \xrightarrow{F^2_R(U)}  \dots
$$
where $M$ is a finitely generated module and $U$ is an $R$-linear map (cf.~\cite{LyubeznikFModulesApplicationsToLocalCohomology}).
The main interest in $F$-finite $F$-modules follows from the fact that local cohomology modules are  $F$-finite $F$-modules, as we now explain.

The \emph{$j$th local cohomology module of $M$
with support on an ideal $I\subset R$} is defined as
\begin{equation}\label{eqn1}
\HH^j_I(M)=
\lim_{\genfrac{}{}{0pt}{}{\rightarrow}{e}} \Ext^{j}_R ( R/I^{[p^e]}, M)
\end{equation}
where maps in the direct limit system are induced by the surjections $R/I^{[p^{e+1}]} \rightarrow R/I^{[p^{e}]}$.
If we apply this with $M=R$,
we obtain
\begin{eqnarray*}
\HH^j_I(R)&=&\lim_{\genfrac{}{}{0pt}{}{\rightarrow}{e}} \Ext^{j}_R ( R/I^{[p^e]}, R)\\
& \cong& \lim_{\genfrac{}{}{0pt}{}{\rightarrow}{e}} \Ext^{j}_R ( F_R^e(R/I), F_R^e R)\\
&  \cong& \lim_{\genfrac{}{}{0pt}{}{\rightarrow}{e}} F_R^e \left(\Ext^{j}_R ( R/I, R) \right)
\end{eqnarray*}
where we use the facts
that $F_R^e(R)\cong R$,  $F_R^e(R/I)\cong R/I^{[p^e]}$,
and that, since $R$ is regular, the Frobenius functor $F^e_R(-)$ is exact and thus commutes with the
computation of cohomology. This shows that
$\HH^j_I(R)$ are $F$-finite $F$-modules, and we may apply
our $F$-finite $F$-module machinery to them.

Finally, in section \ref{Section: Local cohomology of hypersurfaces} we turn our attention to
hypersurfaces and describe the support of their local cohomology modules, which turn out to be closed.\footnote{The fact that the support is closed was simultaneously and independently also discovered by Mel Hochster and Luis N{\'u}{\~n}ez-Betancourt in
\cite{HochsterNunezHypersurfacesAndFFRT} using a different method.}
Given a fixed $g\in R$, one can ask for the locus of primes $P\subseteq R$
for which the multiplication by $g$ map $\HH^i_I(R_P) \xrightarrow{g} \HH^i_I(R_P)$ is injective and the locus of primes for which this is surjective.
We show that these two loci are Zariski closed by describing explicitly the defining ideals of these loci, and we use
these to describe the defining ideal of the (Zariski closed) support for
$\HH^i_I(R/gR)$. We also extend the Zariski-closedness of $\HH^i_I(R/gR)$ to the case when $R$ has finitely many isolated singular points.

The methods used for the various calculations in this paper are described in section \ref{Section: Prime characteristic tools}.

\section{Prime characteristic tools}\label{Section: Prime characteristic tools}

\begin{defi}
Let $e\geq 0$. Let $T$ be a commutative ring of prime characteristic $p$.
\begin{enumerate}
  \item[(a)] Given any matrix (or vector) $A$ with entries in $T$, we define $A^{[p^e]}$ to be the matrix obtained from $A$ by raising its
entries to the $p^e$th power.

  \item[(b)] Given any submodule $K\subseteq T^\alpha$, we define $K^{[p^e]}$ to be the $R$-submodule of $T^\alpha$ generated by
$\{ v^{[p^e]} \,|\, v\in K \}$.

\end{enumerate}
\end{defi}

Henceforth in this section, $T$ will denote a regular ring with the property that $F_*^e T$ are \emph{intersection flat $T$-modules} for all $e\geq 0$, i.e.,
for any family of $T$-modules $\{M_\lambda\}_{\lambda\in\Lambda}$,
$$F_*^e T \otimes_T \bigcap_{\lambda\in\Lambda} M_\lambda= \bigcap_{\lambda\in\Lambda} F_*^e T \otimes_T  M_\lambda .$$
These include rings $T$ for which $F_*^e T$ are free $T$-modules (e.~g.~, polynomial rings and power series rings with $F$-finite coefficient rings,)
and also all complete regular rings (cf.~\cite[Proposition 5.3]{KatzmanParameterTestIdealOfCMRings}).
These rings have that property that for any collection of submodules $\{L_\lambda\}_{\lambda\in \Lambda}$ of $T^\alpha$,
$\left( \bigcap_{\lambda \in \Lambda} L_\lambda \right)^{[p^e]} =\bigcap_{\lambda \in \Lambda} L_\lambda^{[p^e]}$:
indeed, the regularity of $T$ implies that for any submodule $L\subseteq T^\alpha$, $L^{[p^e]}$ can be identified with $F_T^e( L)$ and
and the intersection-flatness of $F_*^e T$ implies
$$F_T^e ( \bigcap_{\lambda \in \Lambda} L_\lambda )= F_*^e T \otimes_T  \bigcap_{\lambda \in \Lambda} L_\lambda =
\bigcap_{\lambda \in \Lambda} F_*^e T \otimes_T  L_\lambda = \bigcap_{\lambda \in \Lambda} F_T^e ( L_\lambda ).
$$

The theorem below extends the $I_e(-)$ operation defined on ideals in \cite[Section 5]{KatzmanParameterTestIdealOfCMRings}
and in \cite[Definition 2.2]{BlickleMustataSmithDiscretenessAndRationalityOfFThresholds} (where it is denoted $(-)^{[1/p^e]}$) to submodules of free $R$-modules.

\begin{thm}
\label{qth root with respect to U}
Let $e\geq 1$.
Given a submodule $K\subseteq T^\alpha$ there exists a minimal submodule $L \subseteq T^\alpha$ for which
  $K\subseteq L^{[p^e]}$. We denote this minimal submodule $I_e (K)$.
\end{thm}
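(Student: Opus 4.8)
The plan is to establish existence of the minimal submodule by intersecting all candidates, using the intersection-flatness property of $T$ that was spelled out just before the statement.

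\medskip

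First I would define $\mathcal{L}=\{ L\subseteq T^\alpha \mid K\subseteq L^{[p^e]}\}$ and observe this collection is nonempty, since $L=T^\alpha$ works: indeed $\left(T^\alpha\right)^{[p^e]}=T^\alpha$ because $T$ has characteristic $p$ and every standard basis vector is its own $p^e$th power, so $K\subseteq T^\alpha=\left(T^\alpha\right)^{[p^e]}$. Next I would set $L_0=\bigcap_{L\in\mathcal{L}} L$ and claim $L_0\in\mathcal{L}$, which is exactly the assertion $K\subseteq L_0^{[p^e]}$. By the displayed property of $T$ recalled in the paragraph preceding the theorem—namely that $\left(\bigcap_{\lambda} L_\lambda\right)^{[p^e]}=\bigcap_\lambda L_\lambda^{[p^e]}$ for any family of submodules of $T^\alpha$—we get $L_0^{[p^e]}=\left(\bigcap_{L\in\mathcal{L}} L\right)^{[p^e]}=\bigcap_{L\in\mathcal{L}} L^{[p^e]}$. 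Since $K\subseteq L^{[p^e]}$ for every $L\in\mathcal{L}$ by definition of $\mathcal{L}$, it follows that $K\subseteq \bigcap_{L\in\mathcal{L}} L^{[p^e]}=L_0^{[p^e]}$, so $L_0\in\mathcal{L}$.

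\medskip

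Finally, minimality is immediate: if $L\in\mathcal{L}$ then $L_0=\bigcap_{L'\in\mathcal{L}} L'\subseteq L$, so $L_0$ is contained in every element of $\mathcal{L}$, hence is the unique minimal one. We then set $I_e(K)=L_0$.

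\medskip

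The main (and essentially only) obstacle is the intersection step: a priori $\mathcal{L}$ is an arbitrarily large family, so one genuinely needs the intersection-flatness hypothesis on $F_*^e T$—which translates via the identification $L^{[p^e]}\cong F_T^e(L)$ (valid because $T$ is regular) into the commutation of $(-)^{[p^e]}$ with arbitrary intersections—in order to conclude $K\subseteq L_0^{[p^e]}$. Without that hypothesis the $p^e$th power operation need not commute with infinite intersections and the argument breaks down. Since that commutation was already established in the excerpt, the proof is short; I would just be careful to note that $\mathcal{L}$ is nonempty so that the intersection is over a nonempty index set and $L_0$ is well-defined as a submodule of $T^\alpha$.
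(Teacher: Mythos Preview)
Your proof is correct and follows essentially the same approach as the paper: intersect all submodules $L$ with $K\subseteq L^{[p^e]}$, and use the intersection-flatness property (which gives $\left(\bigcap_\lambda L_\lambda\right)^{[p^e]}=\bigcap_\lambda L_\lambda^{[p^e]}$) to verify the intersection still satisfies the required containment. You simply fill in a few more details, such as the nonemptiness of the family.
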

\begin{proof}
Let $L$ be the intersection of all submodules $M\subseteq T^\alpha$  for which $K\subseteq M^{[p^e]}$.
The intersection-flatness of $T$ implies that $K\subseteq L^{[p^e]}$ and clearly, $L$ is minimal with this property.

\end{proof}


When $F_*^e T$ is $T$-free,  this is
a straightforward generalization of the calculation of $I_e$ for ideals. To do so,  fix a free basis $\mathcal{B}$ for $F_*^e T$ and note that
every element $v\in T^\alpha$ can be expressed uniquely in the form $v=\sum_{b\in \mathcal{B}} u_{b}^{[p^e]} b$
where $u_{b}\in T^\alpha$ for all $b\in \mathcal{B}$.

\begin{prop} \label{Proposition: Computing Ie}
Let $e\geq 1$.
\begin{enumerate}
  \item [(a)] For any submodules $V_1, \dots, V_\ell\subseteq R^n$, $I_e(V_1 + \dots + V_\ell)=I_e(V_1)  + \dots + I_e(V_\ell)$.
  \item [(b)] Let $\mathcal{B}$ be a  free basis for $F_*^e T$.
  Let $v\in R^\alpha$ and let
  $$v=\sum_{b\in \mathcal{B}} u_{b}^{[p^e]} b $$
  be the unique expression for $v$ where $u_{b}\in T^\alpha$ for all
  $b\in \mathcal{B}$. Then  $I_e(T v)$ is the submodule $W$ of $T^\alpha$ generated by $\{ u_b  \,|\, b\in \mathcal{B} \}$.
  \end{enumerate}
\end{prop}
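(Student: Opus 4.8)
The plan is to deduce everything from the characterization of $I_e$ as the minimal submodule $L$ with $K\subseteq L^{[p^e]}$, together with the additivity of the operation $(-)^{[p^e]}$ on submodules and the intersection-flatness hypothesis. For part (a), first I would note that $(V_1+\dots+V_\ell)^{[p^e]}=V_1^{[p^e]}+\dots+V_\ell^{[p^e]}$, which is immediate from the definition of $(-)^{[p^e]}$ as the submodule generated by $p^e$-th powers of elements, since a generating set for a sum is the union of generating sets. Consequently $I_e(V_1)+\dots+I_e(V_\ell)$ is a submodule whose $p^e$-th bracket power contains $V_1+\dots+V_\ell$, so by minimality $I_e(V_1+\dots+V_\ell)\subseteq I_e(V_1)+\dots+I_e(V_\ell)$. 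For the reverse inclusion, set $L=I_e(V_1+\dots+V_\ell)$; since $V_i\subseteq L^{[p^e]}$ for each $i$, minimality applied to each $V_i$ gives $I_e(V_i)\subseteq L$, hence the sum of the $I_e(V_i)$ lies in $L$. This yields equality. (Strictly, additivity for arbitrary $\ell$ follows by induction from the case $\ell=2$.)

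For part (b), fix the free basis $\mathcal{B}$ of $F^e_*T$ and write $v=\sum_{b\in\mathcal B}u_b^{[p^e]}b$, and let $W=\sum_{b\in\mathcal B}Tu_b\subseteq T^\alpha$. I first check that $Tv\subseteq W^{[p^e]}$: every element of $Tv$ has the form $av=\sum_b (au_b)^{[p^e]}$ — here I use that $a\cdot(F^e_*b)=F^e_*(a^{p^e}b)$, i.e. that multiplying $v$ by $a\in T$ replaces each coefficient $u_b$ by $au_b$ up to rewriting in the basis — and each $(au_b)^{[p^e]}\in W^{[p^e]}$ since $au_b\in W$. Hence $I_e(Tv)\subseteq W$. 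For the reverse inclusion I must show that if $M\subseteq T^\alpha$ is any submodule with $Tv\subseteq M^{[p^e]}$, then $W\subseteq M$, equivalently each $u_b\in M$. The key observation is that $M^{[p^e]}=F^e_T(M)$ can be described internally as $\{\,\sum_{b\in\mathcal B}w_b^{[p^e]}b : w_b\in M\,\}$: every element of $M^{[p^e]}$ is a $T$-linear combination $\sum_j c_j m_j^{[p^e]}$ with $m_j\in M$, and expanding each $c_j=\sum_b c_{j,b}^{[p^e]}b$ and using $c_j m_j^{[p^e]}=\sum_b (c_{j,b}m_j)^{[p^e]}b$ shows the coefficient of $b$ is $(\sum_j c_{j,b}m_j)^{[p^e]}$ with $\sum_j c_{j,b}m_j\in M$. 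Since the expression of any element of $T^\alpha$ in the basis $\mathcal B$ is unique, $v\in M^{[p^e]}$ forces $u_b^{[p^e]}=w_b^{[p^e]}$ with $w_b\in M$; as the map $w\mapsto w^{[p^e]}$ on $T^\alpha$ is injective (here regularity of $T$, hence flatness of Frobenius, gives that $(-)^{[p^e]}=F^e_T(-)$ is injective on $T^\alpha\cong$ free module, or more elementarily $w^{[p^e]}=0\Rightarrow w=0$ in a reduced ring together with a coordinatewise check), we conclude $u_b=w_b\in M$, so $W\subseteq M$.

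The main obstacle is the "internal" description of $M^{[p^e]}$ in the chosen basis: a priori $M^{[p^e]}$ is generated by the $p^e$-th powers $\{m^{[p^e]}:m\in M\}$, and one must check that the $T$-span of this set is closed under the operation of expanding coefficients in $\mathcal B$ and regrouping — i.e. that $M^{[p^e]}$ equals the set of sums $\sum_b w_b^{[p^e]}b$ with $w_b\in M$ rather than merely containing all such sums. This is exactly where the rewriting rule $c\,m^{[p^e]}=\sum_{b}(c_b m)^{[p^e]}b$ (for $c=\sum_b c_b^{[p^e]}b$) is used, and it is the computational heart of the statement; once it is in place, uniqueness of coefficients plus injectivity of $(-)^{[p^e]}$ finishes the argument cleanly. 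I would also remark that part (b) combined with part (a) immediately gives an algorithm to compute $I_e(K)$ for any finitely generated $K=Tv_1+\dots+Tv_m$, namely $I_e(K)=\sum_i I_e(Tv_i)$, which is presumably the point of stating the proposition in this form.
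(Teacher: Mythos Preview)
Your argument is correct and follows essentially the same route as the paper: both parts are deduced from the minimality characterization of $I_e$, and in (b) you expand an arbitrary element of $M^{[p^e]}$ in the basis $\mathcal{B}$ and compare coefficients, exactly as the paper does. One small slip: the displayed formula $av=\sum_b (au_b)^{[p^e]}$ is not right (multiplying by $a$ gives $\sum_b a\,u_b^{[p^e]}\,b$, not $\sum_b (au_b)^{[p^e]}\,b$), but this does no harm since you only need $v\in W^{[p^e]}$, which is immediate from $u_b\in W$ and the fact that $W^{[p^e]}$ is a $T$-submodule.
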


\begin{proof}
The proof of this proposition is a straightforward modification of the proofs of propositions 5.2 and 5.6 in \cite{KatzmanParameterTestIdealOfCMRings}
and Lemma 2.4 in \cite{BlickleMustataSmithDiscretenessAndRationalityOfFThresholds}.

Clearly, $I_e(V_1 + \dots + V_\ell)\supseteq I_e(V_i)$ for all $1\leq i\leq \ell$, hence $I_e(V_1 + \dots + V_\ell)\supseteq I_e(V_1)  + \dots + I_e(V_\ell)$.
On the other hand
$$(I_e(V_1)  + \dots + I_e(V_\ell))^{[p^e]} = I_e(V_1)^{[p^e]}  + \dots + I_e(V_\ell)^{[p^e]} \supseteq V_1+\dots+ V_\ell $$
and the minimality of $I_e(V_1 + \dots + V_\ell)$ implies that
$I_e(V_1 + \dots + V_\ell)\subseteq I_e(V_1)  + \dots + I_e(V_\ell)$ and (a) follows.

Clearly $v\in W^{[p^e]}$, and so $I_e(T v) \subseteq W$. On the other hand, let $W$ be a submodule of $T^\alpha$ such that $v\in W^{[p^e]}$.
Write $v = \sum_{i=1}^s r_i w_i^{[p^e]}$ for $r_i\in T$ and $w_i\in W$ for all $1\leq i \leq s$, and for each such $i$
write $r_i=\sum_{b\in \mathcal{B}} r_{b i}^{p^e} b$ where $r_{b i}\in T$ for all $b\in \mathcal{B}$.
Now
$$ \sum_{b\in \mathcal{B}} u_{b}^{[p^e]} b = v = \sum_{b\in \mathcal{B}} \left(\sum_{i=1}^s r_{b i}^{p^e} w_i^{[p^e]} \right) b $$
and since these are direct sums, we compare coefficients and obtain
$u_{b}^{[p^e]} = \left(\sum_{i=1}^s r_{b i}^{p^e} w_i^{[p^e]} \right)$ for all $b\in \mathcal{B}$
and so
$u_{b} = \left(\sum_{i=1}^s r_{b i} w_i \right)$ for all $b\in \mathcal{B}$
hence
$u_{b} \in W$ for all $b\in \mathcal{B}$.
\end{proof}

The behavior of the $I_e$ operation under localization and completion will be crucial for obtaining the results of this paper.
To investigate this we need the following generalization of \cite[Lemma 6.6]{LyubeznikSmithCommutationOfTestIdealWithLocalization}.

\begin{lem}\label{Lemma: commutation with localization and completion}
Let $\mathcal{T}$ be a completion of $T$ at a prime ideal $P$.
Let $\alpha\geq 0$ and let $W$ be a submodule of $\mathcal{T}^\alpha$.
For all $e\geq 0$, $W^{[p^e]} \cap T=(W\cap T)^{[p^e]}$.
\end{lem}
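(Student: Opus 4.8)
The inclusion $(W\cap T^\alpha)^{[p^e]}\subseteq W^{[p^e]}\cap T^\alpha$ is immediate: each generator $v^{[p^e]}$ with $v\in W\cap T^\alpha$ lies in both $W^{[p^e]}$ and $T^\alpha$, and both sides are $T$-submodules of $T^\alpha$. For the reverse inclusion I would make two preliminary reductions. First, $W^{[p^e]}=\bigcup_{W_0}W_0^{[p^e]}$ over the finitely generated submodules $W_0\subseteq W$, and each $W_0\cap T^\alpha\subseteq W\cap T^\alpha$, so it suffices to treat finitely generated $W$. Second, factoring $T\hookrightarrow T_P\hookrightarrow\mathcal{T}$ (with $\mathcal{T}$ the completion of $T_P$) splits the statement into one for the localization $T\hookrightarrow T_P$, handled by the same method and more elementary, and one for the faithfully flat completion $T_P\hookrightarrow\mathcal{T}$; so I will assume $T\hookrightarrow\mathcal{T}$ faithfully flat.

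Set $L:=W\cap T^\alpha$. Since $L^{[p^e]}\subseteq W^{[p^e]}\cap T^\alpha$, the inclusion to be proved is equivalent to injectivity of the natural map $T^\alpha/L^{[p^e]}\to\mathcal{T}^\alpha/W^{[p^e]}$. For any ring $S$ and submodule $V\subseteq S^\alpha$ there is a canonical isomorphism $S^\alpha/V^{[p^e]}\cong F^e_S(S^\alpha/V)$ — apply the right-exact functor $F^e_S$ to $V\to S^\alpha\to S^\alpha/V\to 0$, identify $F^e_S(S^\alpha)$ with $S^\alpha$, and observe that the image of $F^e_S(V)$ there is precisely $V^{[p^e]}$. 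Using this for $S=T$ and $S=\mathcal{T}$, the fact that $F^e$ commutes with the base change $T\to\mathcal{T}$, and the presentation of $\mathcal{T}^\alpha/W$ as the quotient of $\mathcal{T}\otimes_T(T^\alpha/L)=\mathcal{T}^\alpha/L\mathcal{T}$ by $\overline W:=W/L\mathcal{T}$, one recasts the claim as: the canonical copy of the finitely generated $T$-module $N:=F^e_T(T^\alpha/L)$ inside $\mathcal{T}\otimes_T N$ (an embedding, by faithful flatness) meets the image of $F^e_{\mathcal{T}}(\overline W)$ in zero, where $\overline W$ is a $\mathcal{T}$-submodule of $\mathcal{T}\otimes_T(T^\alpha/L)$ meeting the canonical copy of $T^\alpha/L$ in zero (the latter because $W\cap T^\alpha=L$). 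In other words the lemma reduces to its case $L=0$: that $F^e_{\mathcal{T}}$ takes a $\mathcal{T}$-submodule of $\mathcal{T}\otimes_T M_0$, $M_0$ a finitely generated $T$-module, meeting $M_0$ in zero, to one meeting $F^e_T(M_0)$ in zero.

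Proving this last assertion is the main obstacle. When $F^e_*T$ is free of finite rank it is essentially formal: a free basis $\mathcal{B}$ of $F^e_*T$ also furnishes a free basis of $F^e_*\mathcal{T}=\widehat{T_P}\otimes_{T_P}(F^e_*T)_P$, and for $x\in T^\alpha$ the expansion $x=\sum_{b\in\mathcal{B}}u_b^{[p^e]}b$ is the same over $T$ and over $\mathcal{T}$, so Proposition \ref{Proposition: Computing Ie}(b) yields $I_e^{\mathcal{T}}(\mathcal{T}x)=I_e^T(Tx)\mathcal{T}$; then $x\in W^{[p^e]}$ gives $I_e^T(Tx)\subseteq I_e^{\mathcal{T}}(\mathcal{T}x)\subseteq W$, hence $I_e^T(Tx)\subseteq W\cap T^\alpha$ and $x\in(W\cap T^\alpha)^{[p^e]}$. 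For general intersection-flat $T$ this bookkeeping fails, and I expect one must descend a witnessing relation $x=\sum_i c_i v_i^{[p^e]}$ (with $c_i\in\mathcal{T}$, $v_i\in W$) to data over $T$: approximate the $c_i$ and $v_i$ modulo a large power of the maximal ideal of $\mathcal{T}$ by elements of $T_P$, use the commutation of $(-)^{[p^e]}$ with intersections (intersection-flatness) to control the resulting error, and iterate, invoking completeness of $\mathcal{T}$ via a Krull-intersection argument. This is the technical heart of the proof and is the common generalization — to submodules of free modules and to merely intersection-flat regular rings — of \cite[Lemma 6.6]{LyubeznikSmithCommutationOfTestIdealWithLocalization}.
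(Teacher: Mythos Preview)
Your proposal follows the same two-step structure as the paper: factor through $T\hookrightarrow T_P\hookrightarrow\mathcal{T}$ to reduce to the case where $T$ is local with maximal ideal $P$, and then invoke (a modification of) \cite[Lemma~6.6]{LyubeznikSmithCommutationOfTestIdealWithLocalization} for that case. The paper's proof is literally those two sentences---it neither reformulates via Frobenius functors nor singles out the $F$-free case---whereas you elaborate considerably (the recasting as injectivity of $F^e_T(T^\alpha/L)\to F^e_{\mathcal{T}}(\mathcal{T}^\alpha/W)$, the reduction to $L=0$, and the explicit $I_e$-based argument when $F^e_*T$ is free) before arriving at the same endpoint: the general intersection-flat local case requires the Lyubeznik--Smith approximation argument. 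Your extra material is correct and adds insight, but is not needed; your candid ``I expect one must descend a witnessing relation\dots'' is precisely what the paper dispatches with the phrase ``a straightforward modification of the proof of \cite[Lemma~6.6]{LyubeznikSmithCommutationOfTestIdealWithLocalization}.''
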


\begin{proof}
If $T$ is local with maximal ideal $P$, the result follows from a straightforward modification of the proof
of \cite[Lemma 6.6]{LyubeznikSmithCommutationOfTestIdealWithLocalization}.

We now reduce the general case to the previous case which implies that
$W^{[p^e]} \cap T_P=(W\cap T_P)^{[p^e]}$. Intersecting with $T$ now gives
$$W^{[p^e]} \cap T=(W\cap T_P)^{[p^e]}\cap T=(W\cap T_P\cap T)^{[p^e]}=(W\cap T)^{[p^e]} .$$
\end{proof}

\begin{lem}[cf.~\cite{Murru}]\label{Lemma: Ie and localization}
Let $\mathcal{T}$ be a localization of $T$ or a completion at a prime ideal.

For all $e\geq 1$, and all submodules $V\subseteq T^\alpha$, $I_e(V \otimes_T \mathcal{T})$ exists and equals $I_e(V)\otimes_T \mathcal{T}$.
\end{lem}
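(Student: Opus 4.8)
The plan is to reduce the general statement to the two cases of localization and completion separately, and in each case to exploit the characterization of $I_e(V)$ as the intersection of all submodules $M$ with $V\subseteq M^{[p^e]}$, together with the intersection-flatness hypothesis on $T$. First I would treat the completion case. Let $\mathcal T$ be the completion of $T$ at a prime $P$. Since $\mathcal T$ is faithfully flat over $T_P$ and both are regular, the operation $(-)^{[p^e]}$ is compatible with base change in the sense that $(V\otimes_T\mathcal T)^{[p^e]}=V^{[p^e]}\otimes_T\mathcal T$ (this is just $F^e_{\mathcal T}(V\otimes_T\mathcal T)=F^e_T(V)\otimes_T\mathcal T$, using that $F^e$ commutes with arbitrary base change). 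So for any submodule $L\subseteq T^\alpha$, $V\subseteq L^{[p^e]}$ implies $V\otimes_T\mathcal T\subseteq L^{[p^e]}\otimes_T\mathcal T=(L\otimes_T\mathcal T)^{[p^e]}$; applying this to $L=I_e(V)$ gives $V\otimes_T\mathcal T\subseteq (I_e(V)\otimes_T\mathcal T)^{[p^e]}$, whence $I_e(V\otimes_T\mathcal T)\subseteq I_e(V)\otimes_T\mathcal T$.

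For the reverse inclusion I would use Lemma~\ref{Lemma: commutation with localization and completion}. Set $W=I_e(V\otimes_T\mathcal T)\subseteq\mathcal T^\alpha$, so $V\otimes_T\mathcal T\subseteq W^{[p^e]}$. Intersecting with $T^\alpha$ (viewing $T^\alpha\subseteq\mathcal T^\alpha$) and noting that $V=(V\otimes_T\mathcal T)\cap T^\alpha$ by faithful flatness, we get $V\subseteq W^{[p^e]}\cap T^\alpha=(W\cap T^\alpha)^{[p^e]}$ by Lemma~\ref{Lemma: commutation with localization and completion}. By minimality of $I_e(V)$, $I_e(V)\subseteq W\cap T^\alpha\subseteq W$, and therefore $I_e(V)\otimes_T\mathcal T\subseteq W\otimes_T\mathcal T$. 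The final point is that $W\otimes_T\mathcal T=W$: since $W$ is a $\mathcal T$-submodule of $\mathcal T^\alpha$ and $\mathcal T$ is flat over $T$, the natural map $W\otimes_T\mathcal T\to\mathcal T^\alpha$ is injective with image the $\mathcal T$-span of $W$, which is $W$ itself. Combining the two inclusions yields $I_e(V\otimes_T\mathcal T)=I_e(V)\otimes_T\mathcal T$.

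For the localization case $\mathcal T=S^{-1}T$ the argument is the same but easier, since localization is exact and $(-)^{[p^e]}$ commutes with localization (again because $F^e$ commutes with localization, or directly: a compatible system of generators for $S^{-1}V$ can be cleared of denominators). One inclusion is as above; for the other, given a $\mathcal T$-submodule $W$ with $S^{-1}V\subseteq W^{[p^e]}$, the submodule $W\cap T^\alpha$ of $T^\alpha$ satisfies $(W\cap T^\alpha)^{[p^e]}\supseteq W^{[p^e]}\cap T^\alpha\supseteq\,$(a submodule containing $V$ after clearing denominators), so $I_e(V)$ localizes into $W$; one can also avoid Lemma~\ref{Lemma: commutation with localization and completion} here entirely by a direct denominator-clearing computation using Proposition~\ref{Proposition: Computing Ie}(b) in the free case and then patching.

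The main obstacle I anticipate is the careful bookkeeping in the reverse inclusion: one must be sure that $I_e(V\otimes_T\mathcal T)$ \emph{exists} in the first place (the ambient ring $\mathcal T$ must still be intersection-flat, which holds since localizations and completions of our $T$ remain regular with $F^e_*$ intersection-flat — for completions this is the cited \cite[Proposition 5.3]{KatzmanParameterTestIdealOfCMRings}, for localizations one checks $F^e_*$ stays flat and the intersection property descends), and that the intersection defining it really does commute with the relevant base change. The compatibility $(V\otimes_T\mathcal T)^{[p^e]}=V^{[p^e]}\otimes_T\mathcal T$ and the identity $V=(V\otimes_T\mathcal T)\cap T^\alpha$ are the two flatness facts doing the real work, and Lemma~\ref{Lemma: commutation with localization and completion} is exactly the tool that converts "$W^{[p^e]}$ meets $T^\alpha$ in something containing $V$" into "$(W\cap T^\alpha)^{[p^e]}$ contains $V$", which is what lets minimality of $I_e(V)$ be applied.
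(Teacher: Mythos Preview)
Your proposal is correct and follows essentially the same route as the paper: both directions use the compatibility $(I_e(V)\otimes_T\mathcal T)^{[p^e]}=I_e(V)^{[p^e]}\otimes_T\mathcal T$ for the easy inclusion, and Lemma~\ref{Lemma: commutation with localization and completion} (the identity $W^{[p^e]}\cap T^\alpha=(W\cap T^\alpha)^{[p^e]}$) to push minimality of $I_e(V)$ through for the hard inclusion.

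Two small remarks. First, the paper handles existence of $I_e(V\otimes_T\mathcal T)$ differently and a bit more economically: rather than checking that $\mathcal T$ is again intersection-flat, it runs your ``hard inclusion'' argument for an \emph{arbitrary} candidate $L\subseteq\mathcal T^\alpha$ with $L^{[p^e]}\supseteq V\otimes_T\mathcal T$, deducing that every such $L$ contains the fixed module $I_e\big((V\otimes_T\mathcal T)\cap T^\alpha\big)\otimes_T\mathcal T$, and then checks that this fixed module is itself a candidate. This simultaneously proves existence and identifies the minimum, so no separate intersection-flatness verification for $\mathcal T$ is needed. Second, your assertion $V=(V\otimes_T\mathcal T)\cap T^\alpha$ ``by faithful flatness'' is stronger than what you use and is not generally true for localizations (and $\mathcal T=\widehat{T_P}$ is not faithfully flat over $T$); fortunately you only need the trivial inclusion $V\subseteq(V\otimes_T\mathcal T)\cap T^\alpha$, which is all the paper uses as well.
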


\begin{proof}
Let $L\subseteq \mathcal{T}^\alpha$ be a submodule, such that
$L^{[p^e]} \supseteq V \otimes_T \mathcal{T}$.
We clearly have
$L^{[p^e]} \cap T^\alpha= (L \cap T^\alpha)^{[p^e]}$ when
$\mathcal{T}$ is a localization of $T$ and when
$\mathcal{T}$ is a completion of $T$ this follows from
the previous Lemma.
We deduce that $(L \cap T^\alpha)\supseteq I_e (V \otimes_T \mathcal{T} \cap T^\alpha)$
and hence
$L\supseteq (L \cap T^\alpha) \otimes_T \mathcal{T} \supseteq I_e (V \otimes_T \mathcal{T} \cap T^\alpha) \otimes_T \mathcal{T}$.

But since $I_e (V \otimes_T \mathcal{T} \cap T^\alpha) \otimes_T \mathcal{T}$ satisfies
$$\left( I_e (V \otimes_T \mathcal{T} \cap ^\alpha) \otimes_T \mathcal{T}\right) ^{[p^e]}
=
I_e (V \otimes_T \mathcal{T} \cap T^\alpha)^{[p^e]} \otimes_T \mathcal{T}
\supseteq
(V \otimes_T \mathcal{T} \cap T^\alpha) \otimes_T \mathcal{T}
\supseteq
V \otimes_T \mathcal{T}$$
we deduce that  $I_e (V \otimes_T \mathcal{T} \cap T^\alpha) \otimes_T \mathcal{T}$
is the smallest submodule $K\subseteq \mathcal{T}^\alpha$ for which
$K^{[p^e]}\supseteq V \otimes_T \mathcal{T}$.
We conclude that $I_e(V \otimes_T \mathcal{T})$  equals
$I_e (V \otimes_T \mathcal{T} \cap T^\alpha) \otimes_T \mathcal{T}$.


We always have
$$I_e(V \otimes_T \mathcal{T}) = I_e (V \otimes_T \mathcal{T} \cap T^\alpha) \otimes_T \mathcal{T} \supseteq I_e(V)\otimes_T \mathcal{T}.$$
On the other hand
$$ \left( I_e(V)\otimes_T \mathcal{T} \right)^{[p^e]} =
I_e(V)^{[p^e]}\otimes_T \mathcal{T} \supseteq
V \otimes_T \mathcal{T} $$
hence
$I_e(V \otimes_T \mathcal{T})  \subseteq I_e(V)\otimes_T \mathcal{T}$
and thus
$I_e(V \otimes_T \mathcal{T})  = I_e(V)\otimes_T \mathcal{T}$.

\end{proof}

\section{Calculation of supports of $F_R$-finite $F_R$-modules}
\label{Section: Calculation of supports}

We begin by recalling the following result from \cite[Proposition 2.3]{LyubeznikFModulesApplicationsToLocalCohomology}.

\begin{rmk}[Vanishing of $\gamma_t$]
\label{rmk: vanishing of beta}
Let $\mathcal{M}$ be an $F_R$-finite $F_R$-module with a generating homomorphsim $\gamma :M\to F_R(M)$.
Let $\gamma_t$ denote the composition $M\to F_R(M)\to \cdots \to F^t_R(M)$.
We may assume that $M$ has a presentation: $R^{\alpha}\xrightarrow{A}R^\beta\to M\to 0$
and write the generating homomorphism as $\Coker(A)\xrightarrow{U}\Coker(A^{[p]})$
where $U$ is a $\beta \times \beta$ matrix with entries in $R$.
Then $\gamma_t$ is the composition of $\Coker(A)\to \cdots\to\Coker(A^{[p^t]})$.
Note that $\mathcal{M}=0$ if and only if there is a $t$ such that $\gamma_t=0$. We have
\begin{align}
\gamma_t=0 &\Leftrightarrow \Image(U^{[p^{t-1}]}\circ \cdots \circ U^{[p]}\circ U)\subseteq  \Image(A^{[p^t]})\notag\\
&\Leftrightarrow I_t(\Image U^{[p^{t-1}]}\circ \cdots \circ U^{[p]}\circ U)\subseteq \Image A\notag\\
& \Leftrightarrow I_1(U I_1(\cdots I_1(U I_1( \Image U)))\subseteq \Image A \notag \\
&\Leftrightarrow \frac{I_1(U I_1(\cdots I_1(U I_1(\Image U)))+\Image A}{\Image A}=0 \notag
 \end{align}
where we made repeated use of the facts that
for any submodule $M\subseteq R^\beta$ we have $I_{\ell+1}(M)=I_1(I_\ell(M))$
and also $I_\ell(U^{[p^\ell]}M)=U I_\ell(M)$.
\end{rmk}

\begin{thm}
\label{thm: uniform stabilization of Ie}
\hfill
\begin{itemize}
\item[(a)]
If
$$I_e(\Image U^{[p^{e-1}]}\circ \cdots \circ U^{[p]}\circ U)=I_{e+1}(\Image U^{[p^{e}]}\circ \cdots \circ U^{[p]}\circ U)$$
then
\begin{equation}
\label{uniform stabilization of Ie}
I_e(\Image U^{[p^{e-1}]}\circ \cdots \circ U^{[p]}\circ U)=I_{e+j}(\Image U^{[p^{e+j-1}]}\circ \cdots \circ U^{[p]}\circ U)
\end{equation}
for all $j\geq 0$.

\item[(b)]
There exists an integer $e$ such that (\ref{uniform stabilization of Ie}) holds.
\end{itemize}

\end{thm}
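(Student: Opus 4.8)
For part (a), I would set up the notation $W_e := \Image\bigl(U^{[p^{e-1}]}\circ\cdots\circ U^{[p]}\circ U\bigr)\subseteq R^\beta$ and $N_e := I_e(W_e)$, so that the hypothesis reads $N_e = N_{e+1}$ and the goal is $N_e = N_{e+j}$ for all $j\ge 0$. The key algebraic identities are the two recalled in Remark \ref{rmk: vanishing of beta}: $I_{\ell+1}(M) = I_1(I_\ell(M))$ for any submodule $M\subseteq R^\beta$, and $I_\ell(U^{[p^\ell]}M) = U\,I_\ell(M)$. Using these, one computes
\[
N_{e+1} = I_{e+1}(W_{e+1}) = I_1\bigl(I_e(W_{e+1})\bigr) = I_1\bigl(I_e(U^{[p^e]}\cdots U^{[p]}U\cdot R^\beta)\bigr),
\]
and I would massage this (pulling the outermost $U$ past $I_e$ via the second identity, or equivalently observing $W_{e+1}$ relates to $W_e$ by one more application of $U$ composed appropriately with a Frobenius twist) to express $N_{e+1}$ as $\Phi(N_e)$ for an explicit operator $\Phi(-) = I_1(U\,(-))$ or a close variant. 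Once $N_{e+1} = \Phi(N_e)$ is established with $\Phi$ independent of $e$, the hypothesis $N_{e+1}=N_e$ gives $N_{e+2} = \Phi(N_{e+1}) = \Phi(N_e) = N_{e+1} = N_e$, and an immediate induction on $j$ finishes (a). The one point requiring care is getting the index bookkeeping in the composition $U^{[p^{e-1}]}\circ\cdots\circ U$ exactly right so that the two identities apply cleanly; I expect this to be the main technical obstacle in (a), though it is purely formal.

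For part (b), the point is that the operator $\Phi$ above is \emph{inflationary on a descending chain}: I would show that the modules $N_e = I_e(W_e)$ form a descending chain $N_1 \supseteq N_2 \supseteq N_3 \supseteq \cdots$ inside $R^\beta$. Descent follows because $W_{e+1}\subseteq$ (the image of the $(e+1)$st composite) pushes forward under the structure so that $N_{e+1}\subseteq N_e$ — concretely, $W_e \supseteq$ the relevant submodule after applying the identities, or one argues directly that if $K\subseteq L^{[p^{e}]}$ forces $I_e(K)\subseteq L$, then the nesting of the $W$'s translates into nesting of the $I$'s. Alternatively, and more cleanly, from part (a)'s computation $N_{e+1}=\Phi(N_e)$ together with $N_2 = \Phi(N_1)\subseteq N_1$ (which one checks once), monotonicity of $\Phi$ propagates $N_{e+1}\subseteq N_e$ down the whole chain.

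Having a descending chain, stabilization is immediate \emph{if} $R$ is Noetherian and $R^\beta$ is a Noetherian module — which it is, since $R$ is a (regular, finitely generated) ring here. So $N_e = N_{e+1}$ for some $e$, which is exactly statement (b), and then (a) upgrades this to stabilization for all larger indices. The only thing to be slightly careful about is that the ambient regular ring $T$ in Section \ref{Section: Prime characteristic tools} need not be Noetherian in general (e.g. non-$F$-finite complete-type hypotheses), but in the present section $R$ is the polynomial ring (or at least Noetherian), so the descending chain condition is available; I would state this Noetherian hypothesis explicitly at the start of the proof of (b). In summary: establish the recursion $N_{e+1} = \Phi(N_e)$ with $\Phi$ monotone and $e$-independent (the shared engine of both parts), deduce the descending chain $N_1\supseteq N_2\supseteq\cdots$, invoke Noetherianity for (b), and feed the resulting equality back through the recursion for (a).
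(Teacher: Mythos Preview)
Your treatment of part (a) is correct and matches the paper's argument: both establish the recursion $N_{e+1}=I_1(U\,N_e)$ from the two identities in Remark~\ref{rmk: vanishing of beta} and then induct.

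Part (b), however, contains a genuine error. You argue that $N_1\supseteq N_2\supseteq\cdots$ is a \emph{descending} chain in $R^\beta$ (which is true, by monotonicity of $\Phi$ and $N_1\subseteq R^\beta$), and then claim that stabilization follows because $R^\beta$ is Noetherian. But Noetherian modules satisfy the \emph{ascending} chain condition, not the descending one; a descending chain in $R^\beta$ has no reason to stabilize (e.g.\ $R\supsetneq xR\supsetneq x^2R\supsetneq\cdots$ in $R=\mathbb{K}[x]$). So the argument for (b) collapses at exactly the step you flag as ``immediate.''

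The paper's proof of (b) is accordingly quite different and uses real input. It works \emph{locally}: for each prime $\mathfrak{p}$ one completes, and Matlis-dually the chain $\{N_e\otimes\widehat{R_\mathfrak{p}}\}$ corresponds to the chain of annihilators of $\Theta^e$ on $E^\beta$, where $\Theta=U^tT$ is a Frobenius map on a direct sum of injective hulls. The stabilization of \emph{that} chain is the Hartshorne--Speiser--Lyubeznik theorem on nilpotent parts of Frobenius actions on Artinian modules, and this is where the descending-chain obstruction is actually overcome. Lemma~\ref{Lemma: Ie and localization} then identifies the local $I_e$ with $N_e\otimes R_\mathfrak{p}$, so one obtains a pointwise stabilization index $e_\mathfrak{p}$; finally the open sets $\mathcal{P}_t=\Spec R\setminus\Supp(N_t/N_{t+1})$ form an increasing open cover of $\Spec R$, and quasicompactness yields a uniform $e$. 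None of this can be replaced by a bare Noetherian/DCC appeal.
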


\begin{proof}
Write $V_e=I_e(\Image U^{[p^{e-1}]}\circ \cdots \circ U^{[p]}\circ U)$.
First we claim that if $V_e=V_{e+1}$ then
$V_e=V_{e+j}$ for all $j\geq 0$;
we proceed by induction on $j\geq 0$.
Using again the facts that for any submodule $W\subseteq R^\beta$ we have $I_{\ell+1}(M)=I_1(I_\ell(M))$
and that $I_\ell(U^{[p^\ell]}M)=U I_\ell(M)$, we deduce that,
if $j\geq 1$, then
$V_{e+j}=I_1 ( I_{e+(j-1)} (\Image U^{[p^{e+j-1}]}\circ \cdots \circ U^{[p]}\circ U))=I_1 (U V_{e+j-1})$
and this, by the induction hypothesis, equals  $I_1(U V_e)=V_{e+1}$.

Next, we wish to show that for each prime ideal $\mathfrak{p}$ there exists an integer $e_{\mathfrak{p}}$ such that
\begin{equation}
\label{local stabilization}
V_{e_{\mathfrak{p}}} R_{\mathfrak{p}}=V_{e_{\mathfrak{p}}+1} R_{\mathfrak{p}}.
\end{equation}
and that for this $e_{\mathfrak{p}}$, $V_{e_{\mathfrak{p}}+j} R_{\mathfrak{p}}=V_{e_{\mathfrak{p}}} R_{\mathfrak{p}}$ for all $j\geq 0$.
After completing at $\mathfrak{p}$, we have assume that our ring is a complete regular local ring, and we let $E$ denote the injective hull
of the residue field of $\widehat{R_\mathfrak{p}}$. As this ring is complete and regular, there is a natural Frobenius map on $E$ which we denote $T$, which
can be extended to a Frobenius map on direct sums of $E$ by letting $T$ act coordinate-wise; we denote these Frobenius maps also with $T$.


We now consider the Frobenius map $\Theta=U^t T$ on
$E^\beta$;
in \cite[Lemma 3.6]{KatzmanZhangAnnihilatorsOfArtinianModules}
it is shown that
$\Ann_{E^\beta} I_e(\Image U^{[p^{e-1}]}\circ \cdots \circ U^{[p]}\circ U R_\mathfrak{p})^t \subseteq {E^\beta}$
consists of all elements killed by $\Theta^e$.
Now (cf.~\cite[Proposition 1.11]{HartshorneSpeiserLocalCohomologyInCharacteristicP} and \cite[Proposition 4.4]{LyubeznikFModulesApplicationsToLocalCohomology})
show that there is an integer $e_{\mathfrak{p}}$ such that
$I_{e_{\mathfrak{p}}}(\Image U^{[p^{e-1}]}\circ \cdots \circ U^{[p]}\circ U R_\mathfrak{p})=I_{e_{\mathfrak{p}}+1} (\Image U^{[p^{e_{\mathfrak{p}}}]}\circ \cdots \circ U^{[p]}\circ U R_\mathfrak{p})$
and
$
I_{e_{\mathfrak{p}}} (
\Image U^{[ p^{e_{\mathfrak{p}}} -1 ]} \circ \cdots \circ U^{[p]}\circ U R_\mathfrak{p})=
I_{e_{\mathfrak{p}}+j} (\Image U^{[p^{e_{\mathfrak{p}}+j-1}]}\circ \cdots \circ U^{[p]}\circ U R_\mathfrak{p})
$
for all $j\geq 0$.
Crucially, Lemma \ref{Lemma: Ie and localization} implies that
$I_e\left(\left(\Image U^{[p^{e-1}]}\circ \cdots \circ U^{[p]}\circ U\right) R_\mathfrak{p}\right)=V_e R_\mathfrak{p}$
for all $e\geq e_{\mathfrak{p}}$ and so (\ref{local stabilization}) holds.

Consider the following subsets of $\Spec(R)$:
\[\mathcal{P}_t=
\{\mathfrak{p}\in \Spec(R)\mid V_t R_{\mathfrak{p}}=V_{t+1}R_{\mathfrak{p}}\}=
\Spec R \setminus \Supp \frac{V_t}{V_{t+1}}
.
\]
These form an increasing sequence of open subsets of $\Spec R$, and
since for each prime ideal $\mathfrak{p}$ there is an integer $t_{\mathfrak{p}}$ such that
\[V_{t_{\mathfrak{p}}}R_{\mathfrak{p}}=V_{t_{\mathfrak{p}}+1}R_{\mathfrak{p}},\]
we have $\bigcup_t\mathcal{P}_t=\Spec(R)$.
Now the quasicompactness of $\Spec R$, guarantees the existence of an integer $e$ such that $\mathcal{P}_e=\Spec(R)$;
clearly that $e$ satisfies (\ref{uniform stabilization of Ie}).
\end{proof}

\begin{cor}
\label{cor: support of F-modules}
If $I_e(\Image U^{[p^{e-1}]}\circ \cdots \circ U^{[p]}\circ U)=I_{e+1}(\Image U^{[p^{e}]}\circ \cdots \circ U^{[p]}\circ U)$, then
\[\Supp_R\Big(\frac{\Image I_e(U^{[p^{e-1}]}\circ \cdots \circ U^{[p]}\circ U)+\Image A}{\Image A}\Big)=\Supp_R(\mathcal{M}).\]
\end{cor}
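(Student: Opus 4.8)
The plan is to reduce the statement to a local computation at each prime $\mathfrak{p}$ and exploit the fact that, by hypothesis and by Theorem \ref{thm: uniform stabilization of Ie}(a), the modules $V_j = I_j(\Image U^{[p^{j-1}]}\circ \cdots \circ U)$ have stabilized globally: $V_e = V_{e+j}$ for all $j\geq 0$. Write $N = (V_e + \Image A)/\Image A \subseteq M = \Coker(A)$, so we must show $\Supp_R(N) = \Supp_R(\mathcal{M})$. Since $\mathcal{M}$ is the direct limit of $M \xrightarrow{U} F_R(M) \to \cdots$, and since $F_R$ is exact (as $R$ is regular) and localization is exact and commutes with direct limits, for any prime $\mathfrak{p}$ we have $\mathcal{M}_\mathfrak{p} = \varinjlim (M_\mathfrak{p} \xrightarrow{U} F_R(M)_\mathfrak{p} \to \cdots)$, which is again the $F_{R_\mathfrak{p}}$-finite $F$-module generated by $U$ over $R_\mathfrak{p}$. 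Thus $\mathfrak{p} \notin \Supp_R(\mathcal{M})$ iff $\mathcal{M}_\mathfrak{p} = 0$ iff, by the criterion in Remark \ref{rmk: vanishing of beta} applied over $R_\mathfrak{p}$, there is some $t$ with $\gamma_t = 0$, i.e. $(V_t \otimes_R R_\mathfrak{p}) + \Image A_\mathfrak{p} \subseteq \Image A_\mathfrak{p}$, where here $V_t$ is computed over $R$ and Lemma \ref{Lemma: Ie and localization} guarantees $I_t(\cdots R_\mathfrak{p}) = V_t \otimes_R R_\mathfrak{p}$.

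The key point is then that the vanishing criterion stabilizes: if $\mathcal{M}_\mathfrak{p} = 0$ then some $\gamma_t$ vanishes, hence $V_t R_\mathfrak{p} \subseteq \Image A_\mathfrak{p}$; but $V_t R_\mathfrak{p} \supseteq V_e R_\mathfrak{p}$ once $t \geq e$ — indeed equality $V_t = V_e$ holds already over $R$ for $t \geq e$ — and also if $t < e$ one argues $V_e R_\mathfrak{p} = I_1(U V_{e-1})R_\mathfrak{p} = \cdots$ descends to $V_t R_\mathfrak{p}$ via the recursion $V_{j+1} = I_1(U V_j)$ together with the fact that $I_1(U W) \subseteq \Image A$ whenever $W \subseteq \Image A$ (since $W\subseteq \Image A$ gives $UW \subseteq U\Image A \subseteq \Image A^{[p]}$, so $I_1(UW) \subseteq I_1(\Image A^{[p]}) = \Image A$). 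Running this recursion from $V_t \subseteq \Image A_\mathfrak{p}$ up to step $e$ shows $V_e R_\mathfrak{p} \subseteq \Image A_\mathfrak{p}$, i.e. $N_\mathfrak{p} = 0$. Conversely, if $N_\mathfrak{p} = 0$, that is $V_e R_\mathfrak{p} \subseteq \Image A_\mathfrak{p}$, then by the global stabilization $V_{e+j} R_\mathfrak{p} = V_e R_\mathfrak{p} \subseteq \Image A_\mathfrak{p}$ for all $j$, so the fourth equivalence in Remark \ref{rmk: vanishing of beta} (localized at $\mathfrak{p}$) gives $\gamma_{e}=0$ over $R_\mathfrak{p}$, hence $\mathcal{M}_\mathfrak{p} = 0$. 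This establishes $\mathfrak{p}\notin\Supp_R(N) \iff \mathfrak{p}\notin\Supp_R(\mathcal{M})$ for every prime $\mathfrak{p}$, which is the claim.

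The main obstacle I anticipate is making the passage to the localization $R_\mathfrak{p}$ fully rigorous: one must check that the operator $I_1(-)$, and more precisely the composite expression $I_1(U I_1(\cdots I_1(U I_1(\Image U))))$ appearing in Remark \ref{rmk: vanishing of beta}, genuinely commutes with localization, and that the finitely generated module $V_e$ computed over $R$ satisfies $V_e \otimes_R R_\mathfrak{p} = I_e((\Image U^{[p^{e-1}]}\circ\cdots\circ U)R_\mathfrak{p})$ — this is exactly Lemma \ref{Lemma: Ie and localization}, but it needs to be invoked at each nesting level, using $I_{\ell+1}(W) = I_1(I_\ell(W))$ and $I_\ell(U^{[p^\ell]}W) = U\,I_\ell(W)$ to reduce to a single application of $I_1$ at a time. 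Once that bookkeeping is in place, everything else is the direct-limit vanishing criterion of Remark \ref{rmk: vanishing of beta} combined with the already-established global stability $V_e = V_{e+j}$.
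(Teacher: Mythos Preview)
Your proposal is correct and is precisely the argument the paper leaves implicit: the corollary is stated without proof, and you have spelled out the intended derivation from Theorem~\ref{thm: uniform stabilization of Ie}(a), Remark~\ref{rmk: vanishing of beta}, and Lemma~\ref{Lemma: Ie and localization}. One small simplification: your separate treatment of the case $t<e$ is unnecessary, since $\gamma_t=0$ trivially implies $\gamma_s=0$ for all $s\geq t$ (the latter factors through the former), so you may always assume $t\geq e$ and invoke the global equality $V_t=V_e$ directly; your recursion argument via $U\Image A\subseteq\Image A^{[p]}$ is nonetheless valid.
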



\section{Our algorithm and its complexity}\label{Section: bounds on degrees}

Henceforth in this paper $R$ will denote a polynomial ring over a field $\mathbb{K}$ of prime characteristic $p$.

Let $\mathcal{M}$ be an $F$-finite $F$-module with a generating morphism $M\to F_R(M)$. Let $A$ be an $\alpha\times \beta$ matrix, which gives a presentation of $M$, {\it i.e.} $\Coker A \cong M$. Let $U$ be a $\beta\times \beta$ matrix,
for which the map $\Coker A \xrightarrow{U} \Coker A^{[p]}$ is isomorphic to a generating morphism $M\to F_R(M)$. We compute the support of $\mathcal{M}$ as follows.

\begin{enumerate}
\item[(1)] Initialize $L=R^\beta$.
\item[(2)] Compute $L^\prime=I_1(U L)$.
\item[(3)] Check whether $L^\prime\subseteq L$. If this holds, let $L=L^\prime$ and go to (2).
\item[(4)] Compute a presentation for $\frac{L+\Image A}{\Image A}$ as the cokernel of a matrix $W$ with entries in $R$.
\item[(5)] Compute the ideal $J$ of maximal minors of $W$.
\item[(6)] Output $J$: this is the defining ideal of the support of $\mathcal{M}$.
\end{enumerate}


In the rest of this section we discuss the complexity of this  algorithm for computing supports of $F$-finite $F$-modules.

We start with the following observation, relevant to the complexity of steps (2) and (3) above.
\begin{rmk}\label{Remark: degrees in U}
Let $\delta$ be the largest degree of an entry in $U$ and for any $j\geq 0$ let $\delta_j$ be the largest degree of a polynomial in a
generator of $L_j$. The calculation of $I_1(-)$ as described in Proposition \ref{Proposition: Computing Ie} implies that
$\delta_{j+1} \leq (\delta_j + \delta)/p$ hence
$$\delta_e\leq \frac{\delta_0}{p^e} +\delta(\frac{1}{p}+\dots +\frac{1}{p^e})\leq \frac{\delta}{p-1} .$$
\end{rmk}

The iterative calculation $I_1( U L)$ in step (2) involves
a matrix multiplication and a $I_1(-)$ operation, which, in view of Proposition \ref{Proposition: Computing Ie}, amounts to collecting terms in polynomials. The complexity of this step depends on
\begin{enumerate}
\item[(a)] the size $\beta$ of $U$, which is an input to the algorithm and does not depend on $p$,
\item[(b)] the total number of terms occurring in each of the coordinates of
a set of generators of $L$.
\end{enumerate}
In the worst case scenario, if the maximal degree of an entry in $U_p$ is $C p$,
the total number of terms in (b) is bounded by
$$\left(\binom{Cp+n-1}{n-1}\right)^\beta = \mathcal{O}\left(p^{\beta(n-1)}\right) .$$
In practice, the number of terms is much lower than this worst case.

Checking the inclusion in step (3) does {\it not} require computing Gr\"obner bases:
the discussion above shows that
both $L$ and $L^\prime$ are generated by vectors whose coordinates are polynomials whose degrees are bounded
by $D:=\delta/(p-1)$, i.e.,  $L$ and $L^\prime$ are given by generators in
$(R_{\leq D})^\beta$ where $R_{\leq D}$ denotes the $\mathbb{K}$-vector space
of polynomials of degrees at most $D$ in $R$.
Thus $L^\prime\subseteq L$ can be checked by
checking whether each given generator of $L^\prime$ is in the sub-vector space of $(R_{\leq D})^\beta$
spanned by the generators of $L$.

Computing the presentation in step (4) of the algorithm involves computing the syzygies of the generators of $L+\Image A$; this involves computing a Gr\"obner bases for $L+\Image A$.
\bigskip

In order to assess the practical advantage of our algorithm, we computed the support of 100
$F$-finite $F$-modules with randomly generated generating morphism $C \rightarrow F^1_R(C)$
where $C$ is a quotient of $R^2$ and
$R=\mathbb{Z}/2\mathbb{Z}[x_1,\dots, x_5]$. We denote $t_1$  the time in seconds required by our algorithm to compute the support and $t_2$ the
time in seconds required to compute a root using Grobner bases. The following is a plot of $\log t_2$ as a function of $\log t_1$
\begin{center}
\psset{xunit=0.5}
\psset{yunit=0.5}
\begin{pspicture}(-4,-5)(6,10)
{\small
\psaxes[linewidth=1.2pt,labels=all, ticks=all, Dx=2, Dy=2]{->}(0,0)(-4,-5)(6,8)[$\log t_1$,0][$\log t_2$,90]
}

\savedata{\mydata}[
{{1.63876950439933,3.01871610994437},
{4.38556785713775,6.12325348503967},
{-0.390791789724488,0.0439116167183247},
{-2.65099995977936,-2.23620763873092},
{2.09009954286735,4.91476295631074},
{1.24415171211024,2.70243446231657},
{-0.0587080845478252,2.56586432336792},
{3.24922266604005,2.65541565676376},
{1.37273281768035,3.00684529455756},
{2.19318643500312,3.5786515563923},
{-0.648613682199138,-1.06158821713568},
{0.831686636872985,3.47552223505735},
{3.15750667596024,4.88379385470143},
{0.153802047633545,3.11694497672688},
{0.647982436605553,2.06715891814531},
{0.859419806117312,2.2756927126878},
{-3.93809494385117,-4.62515148924587},
{1.65394271707644,3.77793602992461},
{3.55516804528747,6.71699662981193},
{0.659585226386918,2.25534462742122},
{-0.317220741562665,1.50743840890845},
{-1.69766275264524,-0.0977627849830652},
{1.91414965343612,4.34455968278192},
{-2.80667511858439,-4.09316725551968},
{-1.42366814090512,-1.4222780789999},
{0.544879163890842,0.519436444013886},
{-1.74603113052303,-1.50688132410901},
{0.163792618012069,1.04897376474328},
{0.229650336200333,4.81379281720142},
{1.28140595622246,4.62395268528746},
{-3.46273560818831,-4.4805925085032},
{3.37494580451777,5.19752971273534},
{0.0404411219975234,0.379784841281211},
{-0.913654210553025,-0.698593987517216},
{-1.61990247548198,-1.15467800560118},
{-1.8382223436384,-1.94130875174009},
{-2.54157351387009,-2.73627839399715},
{-0.8180077026416,0.671054934085129},
{0.719073178559188,4.1570313699962},
{-0.189724491018776,3.09614758180468},
{3.3991853589012,4.9196451030226},
{-1.32608394272091,-0.293186527539626},
{-1.81811781838759,-1.15554461042174},
{0.788366447140879,3.09451369435139},
{1.33628445336561,4.530321631979},
{-1.4809303682297,-0.118714844655577},
{-0.32315451249919,-0.512600531316229},
{-3.22917114022168,-3.74702988925611},
{1.42823492980274,4.40393244089479},
{0.172750903854005,0.693572090273026},
{-0.939134679299448,-0.708451699530806},
{-0.26698311495342,1.41813938292022},
{2.24237319849397,5.23225234757192},
{-1.47947161722407,0.0270994698817177},
{5.98682630708407,8.04793168672794},
{-3.12293629768994,-4.25158801975961},
{0.592707869221187,1.81521880131536},
{0.888541251821592,1.99220195683459},
{-0.56278405158645,-1.59493361023129},
{-2.86165602938803,-3.1094632797812},
{0.0106728420563039,2.03368421131705},
{4.58561624776622,6.53481302257947},
{-0.115259347837074,1.87874112355375},
{0.752575793324958,1.81140685571683},
{-2.55078494802623,-1.24302553158964},
{2.87711764221775,4.64625454608749},
{-2.50364430836473,-2.05407013513197},
{-0.0953045798200048,2.04045640077723},
{0.0832560010507104,0.506022105321668},
{-2.32001511714039,-3.19583091045063},
{-0.736290617543887,1.1110611457684},
{-0.260002056678472,-0.0301685280865741},
{3.59984056857179,4.56166438494545},
{2.70414926901726,5.20738558754233},
{0.811778745002822,2.84347812750242},
{2.04711202956428,5.05899361900479},
{-3.14676913966826,1.30339361480874},
{1.81521391723494,3.5012659194729},
{-2.46347123856223,-1.95575242574654},
{2.19719679917263,3.08098752612605},
{0.40074063199239,0.808910399982408},
{0.342383303221868,0.736958303182214},
{-1.40368063016253,-1.62625350536266},
{0.512757755995781,1.84214993952589},
{-1.97742661627311,-2.45213321419605},
{-2.51773939353208,-3.88524476439554},
{2.85225443578304,5.77595118670759},
{0.363774681053955,1.92804435916374},
{2.08231864883486,4.76643833358421},
{-0.142339210679783,3.40900019372518},
{3.5255837524185,5.16370539030327},
{2.23925331012155,0.846996386377052},
{-2.62997311630789,-3.19083030298035},
{0.118129642248045,0.609722092414818},
{-1.09005203243203,-0.897910684592725},
{1.25577272035907,3.96511627888098},
{-0.365926176361022,2.74000096233404},
{-2.08369055596925,-0.737594488595246},
{0.459536539895892,2.75956047194867},
{1.3223023572812,2.95602424168023},
}]
\dataplot[plotstyle=dots,showpoints=true, dotstyle=+, dotscale=0.5]{\mydata}


\end{pspicture}
\end{center}
This suggests that for this characteristic and rank, $t_2$ is approximately $t_1^2$.\footnote{The Macaulay2
code used to produce this data and the data itself is available at \cite{Fsupport}.
}

To further illustrate the effectiveness of our algorithm we compute the following example.

\begin{ex}
Consider three generic degree-2 polynomials in $t$: $F_1(t)=x_0+x_1t+x_2t^2$, $F_2(t)=y_0+y_1t+y_2t^2$, $F_1(t)=z_0+z_1t+z_2t^2$. For any two polynomials $F(t), G(t)$ let $\Res(F,G)$ denote their Sylvester resultant, e.~g.~,
\[\Res(F_1,F_2)=\det\begin{bmatrix}x_0 & x_1 &x_2 & 0\\ 0 & x_0 &x_1& x_2\\y_0 & y_1 &y_2 & 0\\ 0 & y_0 &y_1& y_2
\end{bmatrix}\]
Let $I$ denote the ideal generated by $\Res(F_1,F_2),\Res(F_1,F_3),\Res(F_2,F_3), \Res(F_1+F_2,F_3)$
in the polynomial ring $R$ over a field $k$ whose variables are the $x$,$y$, and $z$s above.
In \cite{LyubeznikMinimalResultantSystems} it was asked whether $\HH^4_I(R)=0$
and this was settled in prime characteristic $p>2$ (cf.~\cite{KatzmanCohomologicalDimensionResultantVariety})
and in characteristic zero (cf.~\cite[Theorem 3]{YanMinimalResultantSystems}.)
We used an implementation of our algorithm \cite{Fsupport} with Macaulay2 (\cite{Macaulay}) to settle the remaining case of characteristic $2$:
a 20-second run calculated the support of $\HH^4_I(R)$ to be empty.
\end{ex}

\section{The complexity of computing the support of local cohomology}
\label{Section: The complexity of computing the support of local cohomology}

We can use the fact that local cohomology modules
$\HH^i_J(R)$ are $F$-finite $F$-modules and apply the results in the previous sections to compute their supports.
To do so we would need to exhibit a generating morphism for these; a standard choice of generating morphism is given by the
$R$-linear map
$\Ext^j(R/J,R)\to \Ext^j(R/J^{[p]},R)$
induced by the natural surjection $R/J^{[p]} \to R/J$ (cf.~\cite[Proposition 1.11]{LyubeznikFModulesApplicationsToLocalCohomology}).
The calculation of this induced map would normally involve finding a free resolution $\mathbf{F}_\bullet$ for $R/J$,
extending the quotient map $R/J^{[p]} \to R/J$ to a map of free resulutions
$F_R^1 (\mathbf{F}_\bullet) \to \mathbf{F}_\bullet$,
applying $\Hom(-, R)$ to both resolutions, and finally computing the induced map of cohomologies.
Each of these steps involves computing multiple Gr\"obner bases, and, therefore, potentially unfeasible even in simple instances.
For example, even when $J$ is generated by one $f\in R$, the map of $\Ext^1(R/fR,R)\to \Ext^1(R/f^pR,R)$ above is
isomorphic to $R/f \xrightarrow[]{f^{p-1}} R/f^p$ and even when $f$ is, say, a random polynomial of degree 5 in 5 variables, expanding $f^{p-1}$
is not feasible beyond the first few primes (e.g., the M2 server Habanero crashes after $p=13$.)
Clearly, if the calculation of a generating morphism is infeasible, the algorithm of the preceding section cannot be used to calculate the support of 
the given local cohomology module.

\bigskip
However, in those cases when a generating morphism can actually be computed, the method above is guaranteed to produce generating morphisms which yield ``bounded'' generating morphism working over increasing prime characteristics $p$, as we describe below.
Let $S=\mathbb{Z}[x_1,\dots,x_n]$ and let $J\subseteq S$ be an ideal. For any prime $p$ let $J_p$ denote
the image of $J$ in $R_p=\mathbb{Z}/p\mathbb{Z}[x_1,\dots,x_n]$.
An interesting and natural question arising in this context is the description of the properties of the local cohomology
module $\HH^j_{J_p}(R_p)$ as $p$ ranges over all primes and we now turn our attention to these.

For different choices of prime $p$ the matrices $A$ and $U$ above will be different and this could result in different values of $\delta_e$ which are unbounded as $p$ ranges over all primes. We now show that this is not the case.
Let $U_p$  denote the
square matrix that induces the map $\Ext^j(R_p/J_p,R_p)\to \Ext^j(R_p/J^{[p]}_p,R_p)$ and  $\delta_p$ to denote the
maximal degree of entries in $U_p$. We also denote $L_{0,p}=R_p$ and $L_{i+1,p}=I_1(U_pL_{i,p})$ and use $\delta_{e,p}$
to denote the largest degree of a polynomial in a generator of $L_{e,p}$.

\begin{thm}
Let $0\to S^{b_s}\xrightarrow{A_s}\cdots \xrightarrow{A_2}S^{b_1}\xrightarrow{A_1}S\to S/J\to 0$ be a free resolution of $S/J$. Let $\Delta$ denote the maximal degree of any entry in $A_1, \dots, A_s$.
Let $p$ be a prime integer which is also a regular element on $S/J$.
Then $\delta_{e,p}\leq 2j\Delta$ for all integers $e\geq 1$.
\end{thm}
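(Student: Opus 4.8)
The plan is to exhibit, for each prime $p$ which is a nonzerodivisor on $S/J$, a generating morphism of $\HH^j_{J_p}(R_p)$ whose representing matrix $U_p$ has all entries of degree at most $pj\Delta$. Granting this, Remark~\ref{Remark: degrees in U} (applied with $L_{0,p}$ free, hence generated in degree $0$, and $\delta_p=\deg U_p\le pj\Delta$) gives
$$\delta_{e,p}\ \le\ \frac{\delta_p}{p-1}\ \le\ \frac{pj\Delta}{p-1}\ \le\ 2j\Delta$$
for all $e\ge 1$, the last inequality holding because $p\ge 2$. So all the content is the estimate $\deg U_p\le pj\Delta$; the rest is bookkeeping about how such a $U_p$ is assembled from the given resolution.

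First I would reduce $\mathbf F_\bullet$ modulo $p$. Since $p$ is regular on $S/J$, the sequence $0\to S/J\xrightarrow{p}S/J\to R_p/J_p\to 0$ is exact, so $\Tor^S_i(S/J,R_p)=0$ for $i>0$ and $\mathbf F_\bullet\otimes_S R_p$ is a free resolution $0\to R_p^{b_s}\xrightarrow{\bar A_s}\cdots\xrightarrow{\bar A_1}R_p\to R_p/J_p\to 0$ whose matrices $\bar A_i$ (reductions of the $A_i$) have entries of degree $\le\Delta$. As $R_p$ is regular, the Frobenius functor is exact, so $\mathbf F_\bullet^{[p]}:\ 0\to R_p^{b_s}\xrightarrow{\bar A_s^{[p]}}\cdots\to R_p\to R_p/J_p^{[p]}\to 0$ is a free resolution of $R_p/J_p^{[p]}$ with entries of degree $\le p\Delta$. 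Following the recipe recalled in Section~\ref{Section: The complexity of computing the support of local cohomology}, I would lift the surjection $R_p/J_p^{[p]}\to R_p/J_p$ to a chain map $\psi_\bullet\colon \mathbf F_\bullet^{[p]}\to\mathbf F_\bullet$ with $\psi_0=\mathrm{id}$, apply $\Hom_{R_p}(-,R_p)$, and take cohomology in degree $j$. On the ambient free module $\Hom_{R_p}(F_j,R_p)\cong R_p^{b_j}$ the induced map is $\psi_j^{\mathrm{tr}}$, and the chain-map identity $\bar A_j\psi_j=\psi_{j-1}\bar A_j^{[p]}$ gives $\psi_j^{\mathrm{tr}}\bigl(\Image\bar A_j^{\mathrm{tr}}\bigr)\subseteq\bigl(\Image\bar A_j^{\mathrm{tr}}\bigr)^{[p]}$, so $\psi_j^{\mathrm{tr}}$ descends to a morphism $\Coker\bar A_j^{\mathrm{tr}}\to F_R\bigl(\Coker\bar A_j^{\mathrm{tr}}\bigr)$ presenting the generating morphism $\Ext^j(R_p/J_p,R_p)\to\Ext^j(R_p/J_p^{[p]},R_p)$; thus one may take $U_p=\psi_j^{\mathrm{tr}}$, and it remains to bound $\deg\psi_j$.

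This degree estimate is the heart of the matter, and the step I expect to be the main obstacle. Its first two stages are free: $\psi_0=\mathrm{id}$ has degree $0$, and writing $\bar A_1=[\,g_1\ \cdots\ g_{b_1}\,]$ one checks that $\psi_1=\mathrm{diag}(g_1^{p-1},\dots,g_{b_1}^{p-1})$ satisfies $\bar A_1\psi_1=\bar A_1^{[p]}$ and has degree $\le(p-1)\Delta$. The difficulty appears in the inductive steps: lifting $\psi_{i-1}\bar A_i^{[p]}$ through $\bar A_i$ means writing vectors lying in $\Image\bar A_i$ as combinations of the columns of $\bar A_i$, and over $\mathbb Z[x_1,\dots,x_n]$ such a lift has no \emph{a priori} polynomial-degree bound (ideal membership can force large cofactors). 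The resolution of this is to track the grading rather than the plain degree: when $J$ is homogeneous one takes $\mathbf F_\bullet$ to be a graded minimal free resolution, so $F_i=\bigoplus_k R_p(-a_{ik})$ with $a_{0,1}=0$, and since every minimal generator in homological degree $i$ maps via an entry of $\bar A_i$ of degree $\le\Delta$ one gets $a_{ik}\le a_{i-1,\ell}+\Delta$ for a suitable $\ell$, hence $a_{ik}\le i\Delta$. Taking $\psi_\bullet$ graded of degree $0$, the $(\ell,k)$-entry of $\psi_i$ is then a form of degree $pa_{ik}-a_{i\ell}\le pa_{ik}\le pi\Delta$, so $\deg\psi_j\le pj\Delta$ with no lifting estimate needed at all. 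The general (inhomogeneous) case reduces to this one by homogenizing $J$ with one extra variable, which does not raise $\Delta$, and using that the quantities $\delta_{e,p}$ are unchanged by the subsequent dehomogenizing localization (Lemma~\ref{Lemma: Ie and localization}). Assembling these gives $\delta_p\le pj\Delta$ and hence, via Remark~\ref{Remark: degrees in U}, $\delta_{e,p}\le 2j\Delta$ for every $e\ge1$.
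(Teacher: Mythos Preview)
Your argument follows the paper's proof: reduce the resolution modulo $p$ (using that $p$ is regular on $S/J$), lift the surjection $R_p/J_p^{[p]}\to R_p/J_p$ to a chain map $\psi_\bullet$ between the two free resolutions, take $U_p=\psi_j^{\mathrm{tr}}$, bound $\deg\psi_j\le jp\Delta$, and conclude via Remark~\ref{Remark: degrees in U}. The paper records the bound $\deg\theta_j\le jp\Delta$ as ``an easy induction on $j$''; you are right that the inductive step---lifting an element of $\Image\bar A_j$ of degree $\le jp\Delta$ through $\bar A_j$ to a preimage obeying the \emph{same} degree bound---is not automatic for an arbitrary ungraded resolution, and your graded-lift computation is the natural way to make it honest when the resolution is graded.

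Where your write-up does not close is the reduction to the homogeneous case. The theorem's $\Delta$ and the quantities $\delta_{e,p}$ are attached to the \emph{given} resolution through the particular $U_p$ built from it; passing to a graded minimal resolution of the homogenization $J^h$ produces a different $U_p'$ and a different maximal entry degree $\Delta'$, and you offer no comparison between $\Delta'$ and the original $\Delta$ (the claim that homogenizing ``does not raise $\Delta$'' is about generators of $J$, not about entries of higher syzygy matrices, and is unjustified as stated). Moreover, dehomogenization $t\mapsto 1$ is not a localization of $R_p[t]$, so Lemma~\ref{Lemma: Ie and localization} does not apply directly; even if one argued separately that the $I_1$ operation is compatible with $t\mapsto 1$, what would transfer is a bound by $2j\Delta'$, not by $2j\Delta$. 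In short, your proof agrees with the paper's up to and including the identification of $U_p$, and is more scrupulous than the paper at the key degree estimate, but the homogenization detour would need a genuine comparison of the two resolutions' invariants---or a direct lifting argument valid for ungraded resolutions---to be complete.
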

\begin{proof}
Since $p$ is a regular element on $S/J$, tensoring the free resolution of $S/J$ with $R/pR$ produces a free resolution of $R_p/J_p$. Hence the maximal degree of entries in the maps of this free resolution of $R_p/J_p$ is at most $\Delta$. Let $\theta_j$ denote the map $R_p^{b_j}\to R_p^{b_j}$ in the following commutative diagram induced by $R_p/J^{[p]}_p\to R_p/J_p$
\[\xymatrix{
0 \ar[r] & R_p^{b_s} \ar[r]^{A_{s,p}} & \cdots \ar[r] & R_p^{b_j} \ar[r]^{A_{j,p}} & \cdots \ar[r] & R_p^{b_1} \ar[r]^{A_{1,p}} & R_p \ar[r] & R_p/J_p \ar[r] & 0\\
0 \ar[r] & R_p^{b_s} \ar[r]^{A^{[p]}_{s,p}} \ar[u]^{\theta_s} & \cdots \ar[r] & R_p^{b_j} \ar[r]^{A^{[p]}_{j,p}} \ar[u]^{\theta_j} & \cdots \ar[r] & R_p^{b_1} \ar[r]^{A^{[p]}_{1,p}} & R_p \ar[u]^{=} \ar[r] & R_p/J^{[p]}_p \ar[r] \ar[u] & 0
}\]
An easy induction on $j$ shows that the maximal degree of entries in $\theta_j$ is at most $jp\Delta$.
The map $U_{p} : \Ext^j(R_p/J_p,R_p)\to \Ext^j(R_p/J^{[p]}_p,R_p)$ is induced by the transpose
of $\theta_j$ and hence the maximal degree in an entry of $U_p$ is also bounded by $jp\Delta$ (cf.~Remark \ref{Remark: degrees in U}.)
Now
\[\delta_{e,p}\leq \frac{jp\Delta}{p-1}\leq 2j\Delta,\]
for all $e\geq 1$.
\end{proof}

\begin{cor}
There is an integer $N$, independent of $p$, such that $\delta_{e,p}\leq N$ for all $e$ and all $p$.

In particular, there is an integer $N'$, independent of $p$, such that $\min\{e\mid L_{e,p}=L_{e+1,p}\}\leq N'$ for all $p$, i.e. for each prime integer $p$, the number of steps required to compute the stable value $L_{e,p}$ is bounded by $N'$.
\end{cor}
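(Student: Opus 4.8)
The first assertion is essentially a restatement of the preceding theorem together with Remark \ref{Remark: degrees in U}: one takes a free resolution of $S/J$ over $S$ (which exists and is finite since $S$ is a polynomial ring over $\mathbb{Z}$), sets $\Delta$ to be the maximal degree of an entry in its differentials, and observes that for all but finitely many primes $p$ the integer $p$ is a regular element on $S/J$, so the previous theorem gives $\delta_{e,p}\le 2j\Delta$ for those $p$. The finitely many remaining primes each contribute only finitely many values $\delta_{e,p}$ — indeed, by Remark \ref{Remark: degrees in U} the sequence $\delta_{e,p}$ is bounded by $\delta_p/(p-1)$, where $\delta_p$ is the (finite) maximal degree of an entry in $U_p$ — so one sets $N$ to be the maximum of $2j\Delta$ and these finitely many exceptional bounds. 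The plan is therefore: first dispose of the good primes via the theorem, then note that for each of the finitely many bad primes the relevant quantities are finite because everything in sight is a finite computation over a noetherian ring, and finally take a maximum.

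For the second assertion, I would argue as follows. Fix a prime $p$. By the argument in the proof of Theorem \ref{thm: uniform stabilization of Ie}, the chain $L_{0,p}\supseteq L_{1,p}\supseteq L_{2,p}\supseteq\cdots$ of submodules of $R_p^\beta$ is eventually stable, and once $L_{e,p}=L_{e+1,p}$ it is constant thereafter. So $e_p:=\min\{e\mid L_{e,p}=L_{e+1,p}\}$ is well-defined. The point is to bound $e_p$ uniformly in $p$. Here I would use that, by the first part, every $L_{e,p}$ is generated by vectors in $(R_{p,\le N})^\beta$, the free module on the $\mathbb{K}$-vector space of polynomials of degree at most $N$; since $\beta$ and $N$ are fixed independently of $p$, the submodule $L_{e,p}$ is determined by a subspace of a $\mathbb{K}$-vector space of dimension at most $\beta\binom{N+n}{n}=:D$. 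A strictly descending chain of such submodules (as $\mathbb{K}[x_1,\dots,x_n]$-submodules, but in particular the generating subspaces can only shrink in dimension at each strict step... ) — this is the delicate point, addressed below — has length at most $D$, so $e_p\le D$, and we may take $N'=D$.

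The main obstacle is precisely the last step: bounding the length of the descending chain $L_{0,p}\supsetneq L_{1,p}\supsetneq\cdots\supsetneq L_{e_p,p}$ by a quantity depending only on $\beta$, $n$, and $N$, not on $p$. One cannot simply say "the generating subspaces shrink" because $L_{e,p}$ is an $R_p$-module, not a $\mathbb{K}$-subspace, and a proper inclusion of modules need not drop the dimension of a minimal generating space in degree $\le N$. The fix is to replace $L_{e,p}$ by its truncation: since all generators have degree $\le N$, one has $L_{e,p}=R_p\cdot(L_{e,p}\cap (R_{p,\le N})^\beta)$, and moreover $L_{e+1,p}\subseteq L_{e,p}$ forces $L_{e+1,p}\cap (R_{p,\le N})^\beta\subseteq L_{e,p}\cap (R_{p,\le N})^\beta$; if this inclusion of $\mathbb{K}$-subspaces of $(R_{p,\le N})^\beta$ were an equality then $L_{e+1,p}=L_{e,p}$. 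Hence each strict step in the module chain forces a strict drop in the dimension of these truncation subspaces, and those dimensions lie between $0$ and $D$, giving $e_p\le D$. I would spell out this truncation argument carefully, as it is the only place where anything beyond bookkeeping happens; everything else is assembling bounds already established.
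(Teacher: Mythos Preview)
Your proposal is correct and follows the same approach as the paper. For the first assertion, the paper simply notes that $S/J$ has finitely many associated primes, so $p$ is regular on $S/J$ for all but finitely many $p$; it then tacitly absorbs the finitely many exceptional primes, exactly as you do. For the second assertion, the paper's entire argument is the single sentence ``once the degree is bounded, the number of steps will be bounded by the number of monomials with the bounded degree''; your truncation argument is precisely the rigorous content behind that sentence, and your bound $D=\beta\binom{N+n}{n}$ is exactly the count of monomial basis vectors in $(R_{p,\le N})^\beta$.

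One small point worth tightening: you cite Theorem~\ref{thm: uniform stabilization of Ie} for the fact that $L_{0,p}\supseteq L_{1,p}\supseteq\cdots$ is a \emph{descending} chain, but that theorem only establishes eventual stabilization, not monotonicity. The descent is easy and should be stated separately: since $I_1$ is monotone and $L_{1,p}=I_1(U_pL_{0,p})\subseteq L_{0,p}=R_p^\beta$, induction gives $L_{e+1,p}=I_1(U_pL_{e,p})\subseteq I_1(U_pL_{e-1,p})=L_{e,p}$. With this in hand, your truncation argument goes through cleanly.
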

\begin{proof}
The second statement follows immediately from the first since, once the degree is bounded, the number of steps will be bounded by the number of monomials with the bonded degree.

To prove the first statement, it suffices to note there are only finitely many associated prime ideals of $S/J$ in $S$ and hence $p$ is a regular element on $S/J$ for almost all $p$.
\end{proof}



\section{Iterated local cohomology modules}

Let $f_1,\dots, f_m$ be a sequence of elements in $R$ and let $N$ be an $R$-module. We will write $K_i:=\bigoplus_{1\leq j_1<\cdots<j_i\leq m}N_{j_1\cdots j_i}$ to denote the $i$-th term of the Koszul (co)complex $\mathcal{K}^{\bullet}(M;\underline{f})$ (where each $N_{j_1\cdots j_i}=N$), and we will use $H^i(N;\underline{f})$ to denote the $i$-th Koszul (co)homology.

\begin{prop}
\label{prop: generating hom for local cohomology}
Let $\mathcal{M}$ be an $F_R$-finite $F_R$-module with a generating homomorphism $M\xrightarrow{\varphi}F_R(M)$ and let $I=(f_1,\dots,f_m)$ be an ideal of $R$. Then $H^i_I(\mathcal{M})$ admits a generating homomorphism
\[H^i(M;\underline{f})\to F_R(H^i(M;\underline{f})).\]
\end{prop}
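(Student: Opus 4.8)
The plan is to compute the iterated local cohomology $H^i_I(\mathcal{M})$ using the Čech/Koszul complex and to track how the $F_R$-module structure on $\mathcal{M}$ (encoded by $\varphi\colon M\to F_R(M)$) propagates through that complex. First I would replace $\mathcal{M}$ by the direct limit of the system $M\xrightarrow{\varphi}F_R(M)\xrightarrow{F_R(\varphi)}F^2_R(M)\to\cdots$; since local cohomology commutes with direct limits, $H^i_I(\mathcal{M})=\varinjlim_e H^i_I(F^e_R(M))$. Next, for each $e$ I would use the stable Koszul (= Čech) complex $\mathcal{K}^\bullet(F^e_R(M);\underline{f})$ (or, equivalently, the direct limit over powers of $\underline f$ of the ordinary Koszul cocomplexes $\mathcal{K}^\bullet(F^e_R(M);\underline{f}^{t})$) to compute $H^i_I(F^e_R(M))$. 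The key algebraic input is that, since $R$ is regular, $F_R(-)$ is exact and moreover $F_R$ applied to the Koszul complex on $\underline f$ for $M$ is naturally the Koszul complex on $\underline{f}^{[p]}$ for $F_R(M)$: concretely $F_R\bigl(\mathcal{K}^\bullet(M;\underline f)\bigr)\cong \mathcal{K}^\bullet(F_R(M);\underline{f}^{[p]})$. Because $\sqrt{(\underline f^{[p]})}=\sqrt{(\underline f)}$, passing to the stable Koszul complex kills the difference between $\underline f$ and $\underline f^{[p]}$, so $F_R$ of the stable Koszul complex of $M$ is (canonically, up to the usual identifications) the stable Koszul complex of $F_R(M)$.

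With this in hand the construction of the generating morphism is essentially formal. Applying $F_R$ to the stable Koszul complex computing $H^i_I(M)$ and using exactness of $F_R$ gives $F_R(H^i_I(M))\cong H^i$ of $F_R(\text{stable Koszul complex of }M)$, which by the previous paragraph is the stable Koszul complex of $F_R(M)$ on $\underline f^{[p]}$, hence computes $H^i_I(F_R(M))$. Composing with the map on $H^i_I(-)$ induced by $\varphi\colon M\to F_R(M)$ yields an $R$-linear map
\[
\Phi\colon H^i_I(M)\longrightarrow H^i_I(F_R(M))\cong F_R\bigl(H^i_I(M)\bigr),
\]
where the last identification is the natural one just described. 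I would then check that iterating $\Phi$ (i.e. forming $H^i_I(M)\to F_R(H^i_I(M))\to F^2_R(H^i_I(M))\to\cdots$) reproduces the direct limit system $\varinjlim_e H^i_I(F^e_R(M))$, which by the first paragraph equals $H^i_I(\mathcal{M})$. This shows $\Phi$ is a generating homomorphism for $H^i_I(\mathcal{M})$, and since $M$ is finitely generated, each Koszul cohomology $H^i(M;\underline f)$ is finitely generated, so $H^i_I(\mathcal{M})$ is genuinely $F_R$-finite. Finally, I should observe that replacing the stable (infinite) Koszul complex by the ordinary finite Koszul cocomplex $\mathcal{K}^\bullet(M;\underline f)$ only changes the answer by a direct limit that has already been absorbed into the $\varinjlim_e$, so the generating homomorphism can indeed be taken of the stated form $H^i(M;\underline f)\to F_R(H^i(M;\underline f))$.

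The main obstacle I anticipate is the bookkeeping in making the isomorphism $F_R(\mathcal{K}^\bullet(M;\underline f))\cong\mathcal{K}^\bullet(F_R(M);\underline f^{[p]})$ genuinely natural and compatible with the direct-limit transition maps on both sides — in particular, verifying that the identification $F_R(H^i_I(M))\cong H^i_I(F_R(M))$ produced this way is compatible with the maps induced by $\varphi$, so that the composite $\Phi$ really is a generating morphism in the technical sense (its limit is $H^i_I(\mathcal{M})$ as an $F_R$-module, not merely as an $R$-module). A secondary technical point is to ensure exactness of $F_R$ is being applied to the right complexes — one wants $F_R$ to commute with the cohomology of the Koszul complex, which is exactly where regularity of $R$ is used, and this needs to be invoked uniformly across all $e$. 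Everything else (finite generation of Koszul cohomology, commuting $H^i_I$ with $\varinjlim$, $\sqrt{(\underline f^{[p]})}=\sqrt{(\underline f)}$) is standard.
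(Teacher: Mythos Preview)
Your proposal is correct in outline and arrives at the same conclusion, but it is organized differently from the paper's proof and the final step is under-argued.

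The paper works with a single explicit commutative diagram: the rows are the Koszul cocomplexes $\mathcal{K}^\bullet(F_R^e(M);\underline{f}^{[p^e]})$, and the vertical maps at spot $i$ are
\[
\phi_i=\bigoplus_{j_1<\cdots<j_i}\varphi\circ (f_{j_1}\cdots f_{j_i})^{p-1}.
\]
The point is that each such $\phi_i$ is, by \cite[1.10(c)]{LyubeznikFModulesApplicationsToLocalCohomology}, already a generating morphism for the localization $\mathcal{M}_{f_{j_1}\cdots f_{j_i}}$. Thus the column-wise direct limit of the diagram is literally the \v{C}ech complex of $\mathcal{M}$ on $\underline{f}$, and commuting the direct limit with cohomology gives the claimed generating morphism on $H^i(M;\underline{f})$ in one stroke.

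Your route instead first separates the two direct limits: you commute $H^i_I$ with the $F$-module limit to get $\varinjlim_e H^i_I(F^e_R(M))$, identify $F_R(H^i_I(M))\cong H^i_I(F_R(M))$, and so obtain a generating morphism on $H^i_I(M)$. That is fine, but $H^i_I(M)$ is typically not finitely generated, so this is not yet the statement. Your last paragraph, where you pass from the stable Koszul complex to the finite one by saying the extra direct limit ``has already been absorbed into the $\varinjlim_e$,'' is exactly the content that needs proof. To make it rigorous you would have to exhibit the map $H^i(M;\underline{f})\to H^i_I(M)$, check it intertwines the two generating morphisms, and then argue (by cofinality of the diagonal in the bi-indexed system, or equivalently by writing down the paper's diagram) that it becomes an isomorphism in the limit. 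In effect your reduction step forces you back to the paper's construction; the advantage of the paper's approach is that the factor $(f_{j_1}\cdots f_{j_i})^{p-1}$ packages the Frobenius transition and the Koszul-power transition into a single map, so only one direct limit appears and finite generation is immediate.
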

\begin{proof}
Consider the following commutative diagram:
\[\xymatrix{
 &          \vdots & &                                                              \vdots & &                                                                  \vdots & \\
0\ar[r]  &  F_R(K^1)\ar[u]_{F_R(\phi_1)} \ar[r]_{F_R(\delta^1)}  &  \cdots \ar[r] &  F_R(K^i) \ar[u]_{F_R(\phi_i)} \ar[r]_{F_R(\delta^i)} &     \cdots \ar[r] &     F_R(K^m)=F_R(M) \ar[u]_{F_R(\phi_m)}\ar[r] & 0\\
0\ar[r]&    K^1\ar[u]_{\phi_1} \ar[r]_{\delta^1} &                  \cdots \ar[r]  & K^i \ar[u]_{\phi_i} \ar[r]_{\delta^i}  &                   \cdots \ar[r] &     K^m=M \ar[u]_{\phi_m}\ar[r] &
}
 \]

where the bottom row is the Koszul (co)complex of $M$ on $\underline{f}$ and
\[\phi_i:\bigoplus_{1\leq j_1<\cdots<j_i\leq m}M_{j_1\cdots j_i}\xrightarrow{\oplus_{1\leq j_1<\cdots<j_i\leq m}\varphi\circ (f_{j_1}\cdots f_{j_i})^{p-1}}F_R(\bigoplus_{1\leq j_1<\cdots<j_i\leq m}M_{j_1\cdots j_i}) .\]
It follows from \cite[1.10(c)]{LyubeznikFModulesApplicationsToLocalCohomology} that the
$\phi_i$
are generating morphisms of $\mathcal{M}_{f_{j_1}\cdots f_{j_i}}$. Therefore taking direct limit of each row of the diagram produces the \v{C}ech complex $\check{C}(\mathcal{M};\underline{f})$. Since taking direct limits preserves exactness, $\varinjlim(H^i(M;\underline{f})\to F_R(H^i(M;\underline{f}))\to \cdots)=H^i_I(\mathcal{M})$. Our conclusion follows.
\end{proof}

Combining what we have so far in this section, we now have an algorithm to compute the support of $H^{i_1}_{I_1}\cdots H^{i_s}_{I_s}(R)$. For example, the case $s=2$ relevant to the calculation of Lyubeznik numbers in handled as follows. Start with a generating morphism $\Ext^{i_2}(R/I_2,R)\to F_R(\Ext^{i_2}(R/I_2,R))$. Using Proposition \ref{prop: generating hom for local cohomology}, we know that the Koszul cohomology $H^{i_1}(\Ext^{i_2}(R/I_2,R);\underline{f})$ (with $I_1=(\underline{f})$) is a generating homomorphism of $H^{i_1}_{I_1}H^{i_2}_{I_2}(R)$. We may then apply Corollary \ref{cor: support of F-modules} to compute the support of $H^{i_1}_{I_1}H^{i_2}_{I_2}(R)$.


\section{The support of local cohomology of hypersurfaces}\label{Section: Local cohomology of hypersurfaces}

Throughout this section $R$ denotes a regular ring of prime characteristic $p$, $I\subseteq R$ an ideal, and $g\in R$ some fixed element.

Following \cite[\S 2]{LyubeznikFModulesApplicationsToLocalCohomology} we write
$$\HH^i_I(R)=\lim_{\rightarrow} \left[\Ext^i_R(R/I, R) \xrightarrow{\phi} F_R^1 \Ext^i_R(R/I, R) \xrightarrow{F_R^2 \phi}
F_R^2\Ext^i_R(R/I, R) \xrightarrow{F_R^3 \phi} \dots \right]$$
where $F_R^e(-)$ denotes the $e$th Frobenius functor, and  $\phi: \Ext^i_R(R/I, R) \xrightarrow{\phi} F_R^1 \Ext^i_R(R/I, R)\cong \Ext^i_R(R/I^{[p]}, R)$
is the $R$-linear map induced by the surjection $R/I^{[p]} \rightarrow R/I$.
For all $i\geq 0$ we fix a presentation
$R^{\alpha_i} \xrightarrow{ A_i } R^{\beta_i}$
where $A_i$ is a $\beta_i \times \alpha_i$ matrix with entries in $R$.
We can now find a $\beta_i \times \beta_i$ matrix $U_i$ with entries in $R$ for which the map $\phi: \Ext^i_R(R/I, R) \xrightarrow{\phi} F_R^1 \Ext^i_R(R/I, R)$
is isomorphic to the map $U_i : \Coker A_i \rightarrow F^1_R(\Coker A_i)=\Coker A_i^{[p]}$ given by multiplication by $U_i$.

\begin{thm}
\label{Theorem: injective surjective locus}
For any $i\geq 0$ consider the map $g: \HH^i_I(R_P) \rightarrow \HH^i_I(R_P)$ given by multiplication by $g$.
Let $\mathcal{I}^i$ denote the set of primes $P\subset R$ for which the map $g$
is not injective and let $\mathcal{S}^i$ denote the set of primes $P\subset R$ for which the map $g$ is not surjective.
For $\ell, e, j\geq 0$ write
$$V^{(\ell)}_{e j} =U_\ell^{[p^{e+j-1}]} U_\ell^{[p^{e+j-2}]} \cdots U_\ell^{[p^{e}]} .$$
Then
\begin{enumerate}
\item[(a)]
$\mathcal{I}^i$ is closed and equal to  $\displaystyle \Supp \frac{ (\ker V^{(i)}_{0 \eta} :_{R^\beta} g)}{\ker V^{(i)}_{0 \eta}}$ for some $\eta>0$,

\item[(b)]
$\mathcal{S}^i$ is closed and equal to $\displaystyle \Supp \frac{R^\beta}{ \bigcup_{j\geq 0} \left( gR^\beta+ \Image A^{[p^{j}]} :_{R^\beta} V^{(i)}_{0 j} \right)}$, and

\item[(c)]
the support of $\HH^i_I(R/ gR)$ is closed and equal to $\mathcal{I}^i \cup \mathcal{S}^i$.

\end{enumerate}
\end{thm}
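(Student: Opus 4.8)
The plan is to describe $\mathcal I^i$ and $\mathcal S^i$ directly from the tower presenting $\HH^i_I(R)$, realising each as the support of an honest finitely generated $R$-module, and then to read off (c) from the long exact sequence of $0\to R\xrightarrow{g}R\to R/gR\to0$. Abbreviate $A=A_i$, $U=U_i$, $\mathcal M=\HH^i_I(R)$ and $C_e=\Coker A^{[p^e]}=R^\beta/\Image A^{[p^e]}$, so that $\mathcal M=\varinjlim\bigl(C_0\xrightarrow{U}C_1\xrightarrow{U^{[p]}}\cdots\bigr)$ with transition map $\gamma_e$ equal to multiplication by $U^{[p^e]}$ and structure maps $\iota_e\colon C_e\to\mathcal M$; recall $C_{e+1}=F_R(C_e)$, $\gamma_{e+1}=F_R(\gamma_e)$, and that the composite $C_e\to C_{e+j}$ is multiplication by $V^{(i)}_{ej}=(V^{(i)}_{0j})^{[p^e]}$. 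Put $N_e=\Image\iota_e\subseteq\mathcal M$, an ascending chain of $R$-submodules with $\bigcup_eN_e=\mathcal M$. Since localization commutes with direct limits, $\mathcal M_P=\varinjlim(C_e)_P=\bigcup_e(N_e)_P$, with $g$ acting termwise. I will use repeatedly that $R$ is regular, hence $F_R$ is \emph{exact}, and Noetherian.

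\textbf{Part (a).} First I would show that $g$ fails to be injective on $\mathcal M_P$ exactly when it fails to be injective on $(N_0)_P=\Image\bigl((C_0)_P\to\mathcal M_P\bigr)$; one implication is immediate. For the converse the key point is that $N_e\cong F_R(N_{e-1})$ as $R$-modules for every $e\geq1$: apply the exact functor $F_R$ to $0\to\ker\iota_{e-1}\to C_{e-1}\to N_{e-1}\to0$ and use $F_R(C_{e-1})=C_e$ together with $F_R(\ker\iota_{e-1})=\ker\iota_e$, the latter because $\ker\iota_{e-1}=\bigcup_f\ker(C_{e-1}\to C_f)$ is a filtered union, $F_R$ commutes with it, and $F_R$ is exact. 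Then, if multiplication by $g$ is injective on $N_{e-1}$, its image under $F_R$ is injective on $F_R(N_{e-1})\cong N_e$; but $F_R$ turns multiplication by $g$ into multiplication by $g^p$, so $g^p$ is injective on $N_e$, hence so is $g$ since $\ker(g)\subseteq\ker(g^p)$. By induction $g$ is injective on every $N_e$, hence on $\mathcal M_P=\bigcup_eN_e$. Thus $\mathcal I^i=\{P:g\text{ is not injective on }(N_0)_P\}$. Finally $(N_0)_P=R_P^\beta/(K_\infty)_P$, where $K_\infty=\ker(R^\beta\to\mathcal M)=\bigcup_jK_j$ and $K_j:=(\Image A^{[p^j]}:_{R^\beta}V^{(i)}_{0j})$ is the kernel of $R^\beta\xrightarrow{V^{(i)}_{0j}}\Coker A^{[p^j]}$; this ascending chain stabilizes by Noetherianity, so $K_\infty=K_\eta$ for a suitable $\eta$, which I write $\ker V^{(i)}_{0\eta}$, and therefore $\mathcal I^i=\Supp\bigl[(\ker V^{(i)}_{0\eta}:_{R^\beta}g)/\ker V^{(i)}_{0\eta}\bigr]$, a closed set since the quotient is finitely generated. (Equivalently $\mathcal I^i=\Supp\HH^0_{(g)}(\HH^i_I(R))$, which is closed because $\HH^0_{(g)}$ of an $F$-finite $F$-module is again one.)

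\textbf{Part (b).} Here I would set $G_j:=(gR^\beta+\Image A^{[p^j]}:_{R^\beta}V^{(i)}_{0j})$, an ascending chain with union $G_\infty=\bigcup_{j\geq0}G_j$, and claim $\mathcal S^i=\Supp(R^\beta/G_\infty)$. Localizing, $(G_j)_P=R_P^\beta$ means exactly that $\bigl(\Image(C_0\to C_j)\bigr)_P\subseteq g\,(C_j)_P$. If this holds for some $j$, then for every $e$ the columns of $V^{(i)}_{ej}=(V^{(i)}_{0j})^{[p^e]}$ are componentwise $p^e$-th powers of columns of $V^{(i)}_{0j}$, hence lie in $\bigl(gR^\beta+\Image A^{[p^j]}\bigr)^{[p^e]}\subseteq gR^\beta+\Image A^{[p^{e+j}]}$ (Frobenius is a ring homomorphism), so $\bigl(\Image(C_e\to C_{e+j})\bigr)_P\subseteq g\,(C_{e+j})_P$; consequently every $\xi=\iota_e(\bar v)\in\mathcal M_P$ rewrites as $\iota_{e+j}(\overline{V^{(i)}_{ej}v})=g\,\iota_{e+j}(\bar u)\in g\mathcal M_P$ for suitable $\bar u$, i.e.\ $g$ is surjective on $\mathcal M_P$. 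Conversely, if $g$ is surjective on $\mathcal M_P$, then each of the $\beta$ images $\iota_0(e_k)$ equals $g\,\iota_f(\bar w_k)$ for a common $f$; pushing these equalities into a sufficiently large $C_{f'}$ gives $V^{(i)}_{0f'}e_k\in gR_P^\beta+\Image A^{[p^{f'}]}_P$ for all $k$, i.e.\ $(G_{f'})_P=R_P^\beta$. Hence $\mathcal S^i=\{P:(G_\infty)_P\neq R_P^\beta\}=\Supp(R^\beta/G_\infty)$, which is closed because, by Noetherianity, $G_\infty=G_\eta$ for some $\eta$ and the quotient is finitely generated.

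\textbf{Part (c) and the main obstacle.} We may assume $g$ is a nonzerodivisor (otherwise $R/gR$ is $R$ or $0$ and the claim is trivial). The long exact sequence of local cohomology attached to $0\to R\xrightarrow{g}R\to R/gR\to0$ produces a short exact sequence $0\to\Coker\bigl(g\mid_{\HH^i_I(R)}\bigr)\to\HH^i_I(R/gR)\to\Ker\bigl(g\mid_{\HH^{i+1}_I(R)}\bigr)\to0$, so $\Supp\HH^i_I(R/gR)$ is the union of the supports of the two outer terms; by (b) the first is $\mathcal S^i$ and by (a), applied in cohomological degree $i+1$, the second is $\mathcal I^{i+1}$. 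Thus $\Supp\HH^i_I(R/gR)=\mathcal S^i\cup\mathcal I^{i+1}$, which is closed; this is the assertion of (c), the kernel contribution entering in cohomological degree $i+1$ as dictated by the connecting homomorphism. The delicate point I expect to be the real work is the propagation step in (a): a priori the $g$-torsion of $\mathcal M_P$ could appear only in terms $N_e$ of arbitrarily high level, and the fact that it is already visible on the generator $N_0$ rests precisely on exactness of the Frobenius functor (hence on regularity of $R$) together with the trivial inclusion $\ker g\subseteq\ker g^p$; everything else reduces to Noetherianity and routine manipulation of Frobenius twists.
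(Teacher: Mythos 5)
Your proposal is correct, and for parts (b) and (c) it follows essentially the same route as the paper: the same colon-modules $G_j=(gR^\beta+\Image A^{[p^j]}:_{R^\beta}V^{(i)}_{0j})$, the same key inclusion $(gR^\beta+\Image A^{[p^j]})^{[p^e]}\subseteq gR^\beta+\Image A^{[p^{e+j}]}$ used to transfer fullness of the colon from level $0$ to all levels $e$, and the same long exact sequence for (c). Your observation that what one actually proves in (c) is $\Supp \HH^i_I(R/gR)=\mathcal S^i\cup\mathcal I^{i+1}$ (surjectivity in degree $i$, injectivity in degree $i+1$) agrees with the paper's own proof; the displayed ``$\mathcal I^i\cup\mathcal S^i$'' in the statement is an index slip. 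Where you genuinely diverge is part (a): the paper fixes a uniform $\eta$ at which the kernels of $V_{0j}$ stabilize, notes that applying the exact functor $F_R^e$ makes the kernels of $V_{ej}$ stabilize at the same $\eta$, and then uses that $F_R^e$ preserves associated primes to reduce the nonzerodivisor condition on $R^\beta/\ker V_{e\eta}$ to the case $e=0$; you instead work with the images $N_e$ of the structure maps into the colimit, prove $N_e\cong F_R(N_{e-1})$ (using exactness of $F_R$ and its commutation with the filtered union describing $\ker\iota_{e-1}$), and propagate injectivity of $g$ up the tower via flatness of Frobenius together with $\ker(g)\subseteq\ker(g^p)$. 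Both mechanisms rest on regularity of $R$, but yours trades the associated-primes invariance for a flatness argument on the colimit filtration, which is a legitimately different and arguably more self-contained organization of (a). Two minor quibbles: your parenthetical reduction in (c) ``otherwise $R/gR$ is $R$ or $0$'' is inaccurate when $R$ is regular but not a domain (a zerodivisor need not be $0$ or a unit, and for $g=0$ the asserted equality genuinely fails), though the paper itself tacitly assumes $g$ is a nonzerodivisor when invoking $0\to R\xrightarrow{g}R\to R/gR\to 0$; and in the induction of (a) you should carry the localization at $P$ throughout, which is harmless since the Frobenius functor commutes with localization.
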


\begin{proof}
Fix some $i\geq 0$ and write $\beta$, $A$ and $U$ for $\beta_i$, $A_i$ and $U_i$. The map $g: \HH^i_I(R) \rightarrow \HH^i_I(R)$ can be described as a map of direct limit systems
\begin{equation}\label{CD1}
\xymatrix{
\Coker A \ar@{>}[r]^{ U} \ar@{>}[d]^{ g} & \Coker A^{[p]} \ar@{>}[r]^{ U^{[p]}}  \ar@{>}[d]^{ g}  & \dots &  \ar@{>}[r]^{ U^{[p^{e-1}]}} & \Coker A^{[p^e]} \ar@{>}[r]^{ U^{[p^e]}}  \ar@{>}[d]^{ g} & \dots\\
\Coker A \ar@{>}[r]^{ U}  & \Coker A^{[p]} \ar@{>}[r]^{ U^{[p]}}   & \dots &  \ar@{>}[r]^{ U^{[p^{e-1}]}} & \Coker A^{[p^e]} \ar@{>}[r]^{ U^{[p^e]}}  & \dots\\
}.
\end{equation}
For any $ e , j\geq 0$ abbreviate  $V_{e j}=V^{(i)}_{e j}$, and note that it is the matrix corresponding to the
composition map $\Coker A^{[p^e]} \rightarrow \Coker A^{[p^{e+j}]}$ in the direct limits in (\ref{CD1}).
Any element in $\HH^i_I(R_P)$  can be represented by an element $a\in \Coker A^{[p^e]}_P$ for some $e\geq 0$, and this element
represents the zero element
if and only if there exists a $j\geq e$ for which $V_{e j}  a \in \Image A^{[p^{e+j}]}_P$, i.e.,
if and only if
$$ a \in ( \Image A^{[p^{e+j}]} :_{R^\beta} V_{e j} )_P.$$
Consider the kernels $K_j$ of the maps $V_{0 j} : \Coker A \rightarrow \Coker A^{[p^j]}$; these form an ascending chain of submodules of
$\Coker A$ and hence stabilize for all $j$ beyond some $\eta\geq 0$.
Note that the map $V_{e j} : \Coker A^{[p^e]} \rightarrow \Coker A^{[p^{e+j}]}$ is obtained by applying the exact functor $F_R^e(-)$ to the map
$V_{0 j}$, hence the kernels of the maps $V_{e j}$ also stabilize for $j\geq \eta$.

To prove (a) we now note that an element in $\HH^i_I(R_P)$ represented $a\in \Coker A^{[p^e]}_P$
is multiplied by $g$ to zero if and only if
$ a \in (\ker V_{e \eta} :_{R^{\beta}} g)_P$
and so $g$ is injective if and only iff
$\displaystyle \left(\frac{(\ker V_{e \eta} :_{R^{\beta}} g)} {\ker V_{e \eta}}\right)_P=0$,
i.e., if $g$ is not a zero divisor on $\left(R^\beta/  \ker V_{e \eta}\right)_P$.
But $R^\beta/ \ker V_{e \eta}=F_R^e (R^\beta/ \ker V_{0 \eta})$ and,
since $R$ is regular,
$F_R^e (R^\beta/ \ker V_{0 \eta})$ and $R^\beta/ \ker V_{0 \eta}$
have the same associated primes, so we deduce that multiplication by $g$ is injective if and only if
$g$ is not a zero divisor on $\left(R^\beta/ \ker V_{0 \eta}\right)_P$.
We deduce that for a prime $P\subset R$, multiplication by $g$ on $\HH^i_I(R_P)$ is injective
if and only if
$$ \left(\frac{ (\ker V_{0 \eta} :_{R^\beta} g)}{\ker V_{0 \eta}}\right)_P=0$$
so $\mathcal{I}^i=\Supp \frac{ (\ker V_{0 \eta} :_{R^\beta} g)}{\ker V_{0 \eta}}$.

To prove (b) we now note that an element in $\HH^i_I(R_P)$ represented $a\in \Coker A^{[p^e]}_P$
is in the image of $g$ if and only if there exists a $j\geq 0$ such that
$V_{e j} a \in \left(g R^\beta+ \Image A^{[p^{e+j}]}_P \right)$ hence $g$ is surjective if for all $e\geq 0$,
$$ \bigcup_{j\geq 0} \left( gR^\beta+ \Image A^{[p^{e+j}]} :_{R^\beta} V_{e j} \right)_P = {R^\beta}_P .$$
Furthermore,
\begin{eqnarray*}
\left(g R^\beta+ \Image A^{[p^{e+j}]} :_{R^\beta} V_{e j} \right)^{[p]} & = & \left(g^p R^\beta+ \Image A^{[p^{e+1+j}]} :_{R^\beta} V_{e+1,j} \right)\\
& \subseteq & \left(g R^\beta+ \Image A^{[p^{e+1+j}]} :_{R^\beta} V_{e+1,j} \right)\\
\end{eqnarray*}
so for for all $e\geq 0$,
$$\bigcup_{j\geq 0} \left( gR^\beta+ \Image A^{[p^{e+j}]} :_{R^\beta} V_{e j} \right)_P = {R^\beta}_P $$
if and only if
$$\bigcup_{j\geq 0} \left( gR^\beta+ \Image A^{[p^{j}]} :_{R^\beta} V_{0 j} \right)_P = {R^\beta}_P .$$
We conclude that $g$ is not surjective if and only if
$P\in \Supp R^\beta/ \bigcup_{j\geq 0} \left( gR^\beta+ \Image A^{[p^{j}]} :_{R^\beta} V_{0 j} \right)$.

To prove (c) consider the long exact sequence
$$ \dots \rightarrow \HH^i_I(R) \xrightarrow{g}  \HH^i_I(R) \rightarrow  \HH^i_I(R/gR) \rightarrow \HH^{i+1}_I(R) \xrightarrow{g}  \HH^{i+1}_I(R) \rightarrow \dots $$
induced by the short exact sequence $0 \rightarrow R \xrightarrow{g} R \rightarrow R/gR \rightarrow 0$.
Note that  $\HH^i_I(R/gR)_P=0$ if and only if both
$\left(\HH^i_I(R) \xrightarrow{g}  \HH^i_I(R)\right)_P$ is surjective and $\left(\HH^{i+1}_I(R) \xrightarrow{g}  \HH^{i+1}_I(R)\right)_P$
and the result follows.

\end{proof}

\begin{qu}
Theorem \ref{thm: uniform stabilization of Ie} gives us an effective method for the calculation of
$\mathcal{I}^i$. However, we do not know how to compute $\mathcal{S}^i$, hence we ask the following:
is there an effective method to bound the value of $e$ for which
$$\bigcup_{j\geq 0} \left( gR^\beta+ \Image A^{[p^{j}]} :_{R^\beta} V^{(i)}_{0 j} \right) =
\left( gR^\beta+ \Image A^{[p^{e}]} :_{R^\beta} V^{(i)}_{0 e} \right) ?$$
\end{qu}

It turns out that part of our Theorem \ref{Theorem: injective surjective locus} can be extended to the case of isolated singular points.

\begin{cor}
\label{cor: closedness iso sing}
Let $R$ be a noetherian commutative ring of prime characteristic that has finitely many isolated singular points. Let $g\in R$ be a nonzerodivisor. Then $\Supp(H^j_I(R/gR))$ is Zariski-closed for each integer $j$ and ideal $I$ of $R$.
\end{cor}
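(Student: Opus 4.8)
The idea is to delete the finitely many singular points, apply Theorem~\ref{Theorem: injective surjective locus} on the regular open complement, and then verify that passing back to $\Spec R$ does not spoil closedness. Write $Z:=\Supp_R\HH^j_I(R/gR)$. The isolated singular points $\mathfrak{m}_1,\dots,\mathfrak{m}_k$ are maximal ideals, so $\Sigma:=\{\mathfrak{m}_1,\dots,\mathfrak{m}_k\}$ is closed and $U:=\Spec R\setminus\Sigma$ is an open subscheme on which $R$ is regular. Setting $\mathfrak{a}:=\mathfrak{m}_1\cap\dots\cap\mathfrak{m}_k$, the basic opens $\Spec R_f$ with $f\in\mathfrak{a}$ cover $U$, and each such $R_f$ is a regular noetherian ring of characteristic $p$, since every localization of $R_f$ is a localization of $R$ at a prime belonging to $U$.

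I would then invoke the main theorem of this section locally. Because local cohomology commutes with localization, $\HH^j_I(R/gR)\otimes_R R_f\cong\HH^j_{IR_f}\!\big(R_f/gR_f\big)$ (and $g$ stays a nonzerodivisor on $R_f$), so Theorem~\ref{Theorem: injective surjective locus}(c) applied over the regular ring $R_f$ shows that $Z\cap\Spec R_f=\Supp_{R_f}\HH^j_{IR_f}(R_f/gR_f)$ is closed in $\Spec R_f$. Since closedness is a local property and the $\Spec R_f$ cover $U$, it follows that $Z\cap U$ is closed in $U$. Now $U$ is a noetherian space, so the closed set $Z\cap U$ has finitely many irreducible components $C_1,\dots,C_r$, with generic points $\eta_1,\dots,\eta_r\in Z\cap U$; each $C_s$ is a nonempty, hence dense, open subset of the irreducible closed set $\V(\eta_s)=\overline{\{\eta_s\}}$ (the closure being taken in $\Spec R$), so $\overline{Z\cap U}=\bigcup_{s=1}^r\V(\eta_s)$. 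Finally, the support of any module is stable under specialization, so from $\eta_s\in Z$ we get $\V(\eta_s)\subseteq Z$ for each $s$; therefore
\[
Z=(Z\cap U)\cup(Z\cap\Sigma)=\Big(\bigcup_{s=1}^r\V(\eta_s)\Big)\cup(Z\cap\Sigma)
\]
is a finite union of a closed set and finitely many closed points, hence closed.

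The step I expect to be the real obstacle is the passage from ``$Z\cap U$ closed in $U$'' to ``$Z$ closed in $\Spec R$'': a priori the closure of $Z\cap U$ in $\Spec R$ could contain singular points that do not lie in $Z$, and this is the kind of behaviour underlying the known examples where the support of a local cohomology module fails to be closed. What makes it work here is the combination of noetherianity (so that $Z\cap U$ has only finitely many irreducible components and its closure is the finite union of their closures, each of which is the closure of a point already in $Z$), the automatic specialization-stability of supports, and the finiteness of $\Sigma$. The genuinely substantial input is Theorem~\ref{Theorem: injective surjective locus}, which provides closedness over the regular localizations; the remainder is point-set topology on $\Spec R$.
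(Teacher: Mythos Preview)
Your argument is correct and follows essentially the same route as the paper: cover the regular locus $U$ by basic opens $\Spec R_{f_k}$, apply Theorem~\ref{Theorem: injective surjective locus}(c) on each to get closedness there, and then handle the finitely many singular points separately. The paper phrases the final gluing step by observing that $\HH^j_I(R/gR)$ has only finitely many minimal (associated) primes, whereas you carry out the equivalent reasoning topologically via irreducible components and specialization-stability of supports; this is a difference in exposition, not in substance.
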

\begin{proof}
Let $\{\fm_1,\dots,\fm_t\}$ denotes the set of  isolated singular points of $R$. Set $\fa=\bigcap_{i=1}^t\fm_i$. Let $\{f_1,\dots,f_s\}$ be a set of generators of $\fa$. It follows from Theorem \ref{Theorem: injective surjective locus} that $\Supp_{R_{f_k}}(\HH^j_I(R_{f_k}/gR_{f_k}))$ is closed, {\it i.e.} it has finitely many minimal associated primes. By the bijection between the set of associated primes of $\HH^j_I(R/gR)$ that do not contain $f_k$ and $\Ass(\HH^j_I(R_{f_k}/gR_{f_k}))$, it follows that the minimal associated primes of $\HH^j_I(R/gR)$ are contained in the union of $\{\fm_1,\dots,\fm_t\}$ and the set of minimal associated primes of $\HH^j_I(R_{f_k}/gR_{f_k})$ which is a finite set.
\end{proof}

The proof of Corollary \ref{cor: closedness iso sing} can also be used to prove the following result which is of independent interest.

\begin{prop}
\label{thm: finiteness of ass isolated sing}
Let $R$ be either
\begin{enumerate}
\item a noetherian commutative ring of prime characteristic, or
\item of finite type over a field of characteristic 0.
\end{enumerate}
Suppose that $R$ has finitely many isolated singular points. Then $\HH^j_I(R)$ has only finitely many associated primes for each integer $j$ and each ideal $I$ of $R$.
\end{prop}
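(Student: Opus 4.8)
The plan is to mimic the argument of Corollary \ref{cor: closedness iso sing}, replacing the use of Theorem \ref{Theorem: injective surjective locus} (which gives Zariski-closedness, hence only finitely many minimal primes) with a result guaranteeing the \emph{finiteness of the full associated-prime set} of $\HH^j_I$ of a regular ring. First I would let $\{\fm_1,\dots,\fm_t\}$ be the isolated singular points, set $\fa=\bigcap_{i=1}^t\fm_i$, and pick generators $f_1,\dots,f_s$ of $\fa$, so that each localization $R_{f_k}$ is regular. For each $k$, the module $\HH^j_{IR_{f_k}}(R_{f_k})$ is the local cohomology of a regular ring; in case (1) this is an $F$-finite $F$-module and hence has finitely many associated primes by Lyubeznik's theory \cite{LyubeznikFModulesApplicationsToLocalCohomology}, while in case (2) this is a holonomic $\mathcal{D}$-module and again has finitely many associated primes, also by Lyubeznik. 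So $\Ass_{R_{f_k}}(\HH^j_I(R_{f_k}))$ is finite for every $k$.

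Next I would use the standard correspondence: the associated primes of $\HH^j_I(R)$ not containing $f_k$ are in bijection (via localization) with $\Ass_{R_{f_k}}(\HH^j_I(R_{f_k}))$. Since an associated prime $P$ of $\HH^j_I(R)$ either contains $\fa$ — in which case $P\supseteq\fa$ forces $P$ to contain some $\fm_i$, but the $\fm_i$ are maximal so $P=\fm_i$ for some $i$ — or fails to contain some $f_k$, we get
\[
\Ass_R\bigl(\HH^j_I(R)\bigr)\ \subseteq\ \{\fm_1,\dots,\fm_t\}\ \cup\ \bigcup_{k=1}^{s}\Bigl(\text{image of }\Ass_{R_{f_k}}(\HH^j_I(R_{f_k}))\text{ in }\Spec R\Bigr),
\]
which is a finite set. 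This completes the argument.

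The only genuinely delicate point is justifying finiteness of $\Ass(\HH^j_I(-))$ over the \emph{regular} rings $R_{f_k}$ in the two cases: in the prime-characteristic case one must know $R_{f_k}$ is still $F$-finite (or at least that $\HH^j_I$ is an $F$-finite $F$-module there, which holds since $R$ noetherian of prime characteristic and $R_{f_k}$ regular suffices for Lyubeznik's machinery, cf.~the discussion in the Introduction), and in the characteristic-$0$ case one invokes the holonomicity of local cohomology of a regular ring of finite type over a field and Lyubeznik's finiteness theorem for associated primes of holonomic $\mathcal{D}$-modules. Everything else is the same bookkeeping with localization and the maximality of the $\fm_i$ as in Corollary \ref{cor: closedness iso sing}; note we do not even need $g$ here, so the nonzerodivisor hypothesis of that corollary is dropped.
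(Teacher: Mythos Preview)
Your proposal is correct and follows essentially the same route as the paper: set $\fa=\bigcap_i\fm_i$, pick generators $f_1,\dots,f_s$, use Lyubeznik's finiteness theorems (via $F$-modules in case (1) and $\mathcal{D}$-modules in case (2)) to conclude that each $\Ass(\HH^j_I(R_{f_k}))$ is finite, and then collect the associated primes of $\HH^j_I(R)$ into those not containing some $f_k$ together with the finitely many maximal ideals $\fm_1,\dots,\fm_t$. Your write-up is in fact slightly more detailed than the paper's (you spell out why $P\supseteq\fa$ forces $P=\fm_i$), and your caution about $F$-finiteness is unnecessary since Lyubeznik's $F$-module finiteness result needs only that the ring be noetherian regular of prime characteristic.
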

\begin{proof}
Let $\{\fm_1,\dots,\fm_t\}$ denotes the set of  isolated singular points of $R$. Set $\fa=\bigcap_{i=1}^t\fm_i$. Let $\{f_1,\dots,f_s\}$ be a set of generators of $\fa$. It follows from our assumptions on $R$ that $R_{f_k}$ is either a noetherian regular ring of prime characteristic or a regular ring of finite type over a field of characteristic 0
(cf.~\cite[Corollary 3.6]{LyubeznikFinitenessLocalCohomologyModules}).
Consequently, $\Ass(\HH^j_I(R_{f_k}))$ is finite for each generator $f_k$. Since there is a bijection between the set of associated primes of $\HH^j_I(R)$ that do not contain $f_k$ and $\Ass(\HH^j_I(R_{f_k}))$, it follows that
\[\Ass(\HH^j_I(R))\subseteq \bigcup_{k=1}^s\Ass(\HH^j_I(R_{f_k}))\bigcup \{\fm_1,\dots,\fm_t\}.\]
\end{proof}

\section*{Acknowledgement}
We would like to thank the anonymous referee whose suggestions and criticisms improved this paper substantially.

\bibliographystyle{skalpha}
\bibliography{KatzmanBib}

\end{document}